\documentclass{amsart}

\usepackage{amsmath,amsthm,amssymb,mathrsfs,tikz-cd}
\usepackage{mathtools}
\usepackage[psdextra]{hyperref}

\definecolor{dark-red}{rgb}{0.6,0,0}
\definecolor{dark-green}{rgb}{0,0.4,0}
\definecolor{nice-purple}{HTML}{8A0087}
\hypersetup{
  colorlinks,
  citecolor=dark-green,
  linkcolor=dark-red,
  urlcolor=nice-purple,
  pdfauthor={Poly Mathews Jr.},
  pdftitle={Matrix group \textLambda-distributions},
  }

\theoremstyle{plain}

\newtheorem{maintheorem}{Theorem}

\newtheorem{corollary}[subsubsection]{Corollary}

\newtheorem{lemma}[subsubsection]{Lemma}

\theoremstyle{definition}

\newtheorem{example}[subsubsection]{Example}

\newtheorem{definition}[subsubsection]{Definition}
\newtheorem{remark}[subsubsection]{Remark}

\newcommand{\Exp}{\mathrm{Exp}}

\newcommand{\restr}[3][]{#2\csname#1\endcsname|\sb{#3}}
\newcommand{\ul}[1]{\underline{#1}}
\newcommand{\Fil}{\mathrm{Fil}}
\newcommand{\mbb}[1]{\mathbb{#1}}
\newcommand{\Log}{\mathrm{Log}}

\newcommand{\abstracttext}{The $\Lambda$-distribution of a compact matrix group is an invariant in algebraic probability theory that was recently introduced to study zero distributions of function field $L$-functions.
It is encoded by the $\sigma$-moment generating function, a generalization of the Molien series of classical invariant theory.
In this work, we compute the $\sigma$-moment generating functions of finite matrix groups in many new cases.
In particular, we compute the asymptotic $\Lambda$-distributions for the infinite families of Weyl reflection groups of types $B_n/C_n$ and $D_n$, complementing the previously known case of reflection groups of type $A_n$, i.e., symmetric groups.
We also establish a general result relating the shapes of $\sigma$-moment generating functions to the distributions of associated classical random variables, explaining a previous ad hoc observation for independent Gaussians arising from traces of powers on compact classical groups.}

\newcommand{\titletext}{Matrix group \boldmath$\Lambda$-distributions}
\newcommand{\runningtitletext}{Matrix group $\Lambda$-distributions}

\title[\runningtitletext]{\titletext}

\author[Poly Mathews Jr.]{Matthew Bertucci \and Sam Van Blarcom \and Benjamin Glancy \and Sean Howe \and Thomas Madden \and Ben Miller \and Souparna Pal \and Giorgos Papagrigoriou \and Nathan Raikman \and Tien Tran}

\begin{document}

\begin{abstract}
\abstracttext
\end{abstract}

\maketitle

\begingroup
\let\boldmath\relax 
\tableofcontents
\endgroup

\section{Introduction}

Let $G \leq \mathrm{GL}_n(\mathbb{C})$ be a compact matrix group.
For $f$ a symmetric function, we write $X_{G,f}$ for the function on $G$ sending $g \in G$ to the evaluation of $f$ at the eigenvalues of $g$ --- for example, when $f=t_1 + t_2 + t_3 + \ldots$, $X_{G,f}$ sends $g \in G$ to its matrix trace.
For $\Lambda$ the ring of symmetric functions, the \emph{$\Lambda$-distribution of $G$}, introduced in \cite{Howe.RandomMatrixStatisticsAndZeroesOfLFunctionsViaProbabilityInLambdaRings}, is the map $\mu_G: \Lambda \rightarrow \mathbb{Z}$ sending $f$ to $\mathbb{E}[X_{G,f}]$, where $\mathbb{E}$ denotes the expected value for the Haar (i.e., uniform) probability measure on $G$ --- a priori these values are complex numbers obtained by integrating $X_{G,f}$ over $G$, but it follows from character theory that they are in fact integers (see, e.g., \cite[\S4.1]{Howe.RandomMatrixStatisticsAndZeroesOfLFunctionsViaProbabilityInLambdaRings}).  

In \cite[Theorem~A]{Howe.RandomMatrixStatisticsAndZeroesOfLFunctionsViaProbabilityInLambdaRings} it was shown that, for $G$ a compact classical group, the $\Lambda$-distribution of $G$ can be expressed succinctly via the \emph{$\sigma$-moment generating function} of an associated $\lambda$-ring valued random variable $X_G$:
\begin{equation}\label{eq.intro-sigmf} 
\mathbb{E}\left[\Exp_{\sigma}(X_G h_1)\right] = \sum_{\tau} \mathbb{E}[X_{G,h_\tau}]m_\tau.
\end{equation}
The sum on the right side of \eqref{eq.intro-sigmf} is over all partitions $\tau$, the $h_\tau$ are the complete symmetric functions, and the $m_\tau$ are the monomial symmetric functions.
The expression $\Exp_{\sigma}$ appearing on the left side of \eqref{eq.intro-sigmf} is the plethystic exponential in the theory of pre-$\lambda$ rings, which plays a role here analogous to the usual exponential in the study of moment generating functions in classical probability theory.
Rather than defining the plethystic exponential precisely (see \S\ref{ss.prelambda} for this), for now the right hand side of \eqref{eq.intro-sigmf} can be taken as the definition of the $\sigma$-moment generating function. 

\begin{example}\label{example.intro-orthogonal}
    For $\mathrm{O}(n) \leq \mathrm{GL}_n(\mathbb{C})$ the group of $n\times n$ orthogonal matrices, \cite[Theorem~A]{Howe.RandomMatrixStatisticsAndZeroesOfLFunctionsViaProbabilityInLambdaRings} gives
    \begin{equation}\label{eq.intro-orthogonal-limit}
    \lim_{n \rightarrow \infty} \mathbb{E}\left[\Exp_{\sigma}(X_{\mathrm{O}(n)} h_1)\right] = \Exp_\sigma(h_2)=\prod_{1\leq i \leq j}\frac{1}{1-t_i t_j}.
    \end{equation}
    In particular, as $n \rightarrow \infty$, we obtain an analog of the moment generating function $e^{\frac{t^2}{2}}$ of a classical Gaussian distribution of mean zero and variance $1$.
    Note that Diaconis and Shahshahani (\cite[Theorem~4]{DiaconisShahshahani.OnTheEigenvaluesOfRandomMatrices}) previously computed the limiting distributions of traces of powers of Haar random orthogonal matrices as independent Gaussian random variables, and this computation is equivalent to \eqref{eq.intro-orthogonal-limit} by \cite[Corollary~4.4.1]{Howe.RandomMatrixStatisticsAndZeroesOfLFunctionsViaProbabilityInLambdaRings}. We give a new, more natural, interpretation of this equivalence in Example~\ref{example.orthogonal-and-symplectic}.
\end{example}

The main purpose of \cite{Howe.RandomMatrixStatisticsAndZeroesOfLFunctionsViaProbabilityInLambdaRings} was to set up a basic theory of probability in pre-$\lambda$ rings and then use it to compare random matrix $\Lambda$-distributions to $\Lambda$-distributions describing the zeroes of function field $L$-functions.
In particular, the computation of the $\sigma$-moment generating functions for classical groups in \cite[Theorem~A]{Howe.RandomMatrixStatisticsAndZeroesOfLFunctionsViaProbabilityInLambdaRings} was made  in order to compare with the $\sigma$-moment generating functions of families of $L$-functions whose statistics were already expected to have matching asymptotic distributions.
This work was subsequently complemented by:
\begin{itemize} 
\item \cite{BertucciHowe.EquidistributionAndArithmeticLambdaDistributions}, which further developed the methods used to compute the arithmetic $\Lambda$-distributions arising from function field $L$-functions.
\item \cite{Howe.TheNegativeSigmaMomentGeneratingFunction}, which, motivated by an application to moments of central values of function field $L$-functions, explained the relation between the $\sigma$-moment generating function of a pre-$\lambda$ random variable $X$ and its negative $-X$.
\end{itemize}

\subsection{Contributions}
In the present work, we focus on developing the theory of \cite{Howe.RandomMatrixStatisticsAndZeroesOfLFunctionsViaProbabilityInLambdaRings} as applied to matrix groups.
In particular, we are less interested here in applications to $L$-functions and more interested in the general shape of $\sigma$-moment generating functions for compact matrix groups, whether or not they arise as monodromy groups for interesting families of $L$-functions.
We make two main contributions:
\begin{enumerate}
    \item In Theorems~\ref{thm.exact-distributions-of-finite-groups} and \ref{thm.asymptotic-distributions-of-finite-groups}, we compute the $\Lambda$-distributions for several families of finite matrix groups (in \cite{Howe.RandomMatrixStatisticsAndZeroesOfLFunctionsViaProbabilityInLambdaRings}, only the symmetric group $\Sigma_n$ embedded in $\mathrm{GL}_n(\mathbb{C})$ as the permutation matrices is treated).
    In particular, we treat natural matrix realizations of cyclic groups, alternating groups, dihedral groups, Pauli groups, and  Weyl groups of type $B_n/C_n$ and type $D_n$.
    The $\sigma$-moment generating function of a finite matrix group is a generalization of its Molien series (whose role in classical invariant theory is described, e.g., in \cite{Stanley.InvariantsOfFiniteGroupsAndTheirApplicationsToCombinatorics}), and one of the tools we use in our computations is an analog of Molien's formula (Lemma~\ref{lemma.MolienAnalog}).
    These results are discussed in \S\ref{s.lambda-finite-groups}.
    \item In Theorem~\ref{thm.lambda-to-classical}, we describe a relation between the $\sigma$-moment generating functions of pre-$\lambda$ random variables and the moment generating functions of associated classical random variables.
    This result explains and generalizes the connection between the pre-$\lambda$ and classical Gaussian distributions observed in Example~\ref{example.intro-orthogonal} and other related computations, answering the question implicit in \cite[Remark~1.1.6]{Howe.RandomMatrixStatisticsAndZeroesOfLFunctionsViaProbabilityInLambdaRings}.
    These results are discussed in \S\ref{s.lambda-to-classical}.  
\end{enumerate}

\subsection{Organization}
In \S\ref{s.preliminaries} we collect some preliminaries on symmetric functions and probability in (pre-)$\lambda$-rings.
In \S\ref{s.lambda-finite-groups} we state and prove our results on the $\Lambda$-distributions of finite groups, including Theorems~\ref{thm.exact-distributions-of-finite-groups} and \ref{thm.asymptotic-distributions-of-finite-groups} described above.
In \S\ref{s.lambda-to-classical} we establish our results connecting $\Lambda$-distributions and classical distributions, including Theorem~\ref{thm.lambda-to-classical} described above.  

\subsection{Acknowledgements}
This work was carried out as part of the Summer 2025 Polymath Jr. program, which is partially supported by NSF grant DMS-2341670.
We thank the organizers of the program and the members and friends of the Random Matrices and $\Lambda$-Distributions group, including: Ronno Das, Will Dudarov, Matthew Hase-Liu, and Phil Tosteson.
Sean Howe was partially supported by NSF grants DMS-2201112 and DMS-2501816 during the preparation of this work.

\section{Preliminaries}\label{s.preliminaries}

In this section we briefly recall some preliminaries.
Our presentation here follows the preliminary sections of \cite{BertucciHowe.EquidistributionAndArithmeticLambdaDistributions} and indexes to \cite{Howe.RandomMatrixStatisticsAndZeroesOfLFunctionsViaProbabilityInLambdaRings} for more detailed discussions. 

\subsection{Permutations}
We write $\Sigma_n$ for the symmetric group on $n$ elements, i.e., the automorphism group of the set $\{1, \ldots, n\}$. 

\subsection{Partitions}
A partition is a non-increasing sequence of non-negative integers $\tau=(\tau_1, \tau_2, \ldots)$ that is eventually zero. Given a partition $\tau$, we write $|\tau|=\sum \tau_i$ and $||\tau||$ for the number of $\tau_i$ that are non-zero.  
\subsection{Symmetric functions}
We will use the notation for symmetric functions described in \cite[\S2.1]{Howe.RandomMatrixStatisticsAndZeroesOfLFunctionsViaProbabilityInLambdaRings}.
In particular, we write 
\[ \Lambda = \varprojlim_n \mathbb{Z}[t_1, \ldots, t_n]^{\Sigma_n} \]
for the ring of symmetric functions and $h_i$ (resp.\ $e_i$, resp.\ $p_i$) for the $i$th complete (resp.\ elementary, resp.\ power sum) symmetric function.
For $\tau=(\tau_1,\tau_2,\ldots)$ a partition, $h_\tau:=h_{\tau_1}h_{\tau_2}\cdots$, and similarly for $p_\tau$ and $e_\tau$.
The monomial symmetric function $m_\tau$ is the formal sum of all distinct permutations of the monomial $t_1^{\tau_1}t_2^{\tau_2}\cdots$.
We note that
\[ h_1=e_1=p_1=m_{(1,0,0,\ldots)}=t_1+t_2+t_3+\ldots.\]

\subsubsection{}\label{sss.cycle-counting}For $i\geq 1$, the $i$th cycle counting symmetric function $p'_i$ is given by
  \[
    p'_i = \frac{1}{i}\sum_{d\mid i}\mu\Bigl(\frac{i}{d}\Bigr)p_d
  \]
where $\mu$ is the Möbius function and $p_d$ is the $d$th power sum symmetric function.

\subsection{Pre-\texorpdfstring{\boldmath$\lambda$}{\textlambda} rings}
\label{ss.prelambda}Recall from \cite[\S2.2 and \S2.4]{Howe.RandomMatrixStatisticsAndZeroesOfLFunctionsViaProbabilityInLambdaRings} that a pre-$\lambda$ ring $R$ is a ring equipped with a plethystic action of the ring $\Lambda$ of symmetric functions, written $a \circ r$ for $a \in \Lambda$ and $r \in R$, satisfying certain compatibilities we do not recall here (see loc. cit.). A $\lambda$-ring is a pre-$\lambda$ ring satisfying a longer list of compatibilities. 

\begin{example}[{See \cite[\S2.2]{Howe.RandomMatrixStatisticsAndZeroesOfLFunctionsViaProbabilityInLambdaRings}}]
\mbox{}
\begin{enumerate}
    \item $\Lambda$ is a $\lambda$-ring with the usual plethystic operations.
    \item $\mathbb{Z}[\mathbb{C}]$, the monoid ring of finite formal sums of complex numbers $\sum a_j [z_j]$, is a $\lambda$-ring with plethystic action uniquely determined by the formula 
    \[ p_i \circ \sum a_j [z_j] = \sum a_j [z_j^i]. \]
\end{enumerate}
\end{example}

\subsubsection{}
Recall from \cite[\S2.3]{Howe.RandomMatrixStatisticsAndZeroesOfLFunctionsViaProbabilityInLambdaRings} that, for any pre-$\lambda$ ring $R$, we have a natural pre-$\lambda$ ring structure on 
\[ R[[\ul{t}_{\mathbb{N}}]] =\varprojlim_{n} R[[t_1, \ldots, t_n]] \]
extending the pre-$\lambda$ ring structure on $R$ and such that $p_i \circ t_j = t_j^i$.
Writing $\mathfrak{m}$ for the set of elements in $R[[\ul{t}_{\mathbb{N}}]]$ with constant term zero, as in \cite[\S2.5]{Howe.RandomMatrixStatisticsAndZeroesOfLFunctionsViaProbabilityInLambdaRings}, we have the $\sigma$-exponential 
\[ \Exp_{\sigma}: \mathfrak{m} \xrightarrow{\sim} 1+ \mathfrak{m},\; F \mapsto \sum_{k \geq 0} h_k \circ F \]
and its inverse (which exists for formal reasons)
\[ \Log_{\sigma}: 1+ \mathfrak{m} \xrightarrow{\sim} \mathfrak{m}. \] 
All of these constructions can be restricted to the pre-$\lambda$ subring $\Lambda_R^{\wedge} \subseteq R[[\ul{t}_{\mbb{N}}]]$ of symmetric power series. By \cite[Lemma~2.5.4]{Howe.RandomMatrixStatisticsAndZeroesOfLFunctionsViaProbabilityInLambdaRings}, if $R$ is a $\mathbb{Q}$-algebra, then 
\begin{equation}\label{eq.exp-power-sum-expansion} \Exp_{\sigma}(F)=\prod_{i=1}^\infty \exp\left(\frac{1}{i} p_i \circ F\right) \end{equation}
where here $\exp(t)=\sum_{k \geq 0} \frac{t^k}{k!}$ is the classical exponential series. 

\subsubsection{}\label{sss.powers}
Recall that, for $F \in 1 + \Fil^1R[[\ul{t}_{\mathbb{N}}]]$ and $N \in R[[\ul{t}_{\mathbb{N}}]]$, we have an associated pre-$\lambda$ power as in \cite[Definition~2.6.1]{Howe.RandomMatrixStatisticsAndZeroesOfLFunctionsViaProbabilityInLambdaRings}, 
\[F^N := \Exp_{\sigma}(N \cdot\Log_\sigma(F)).\]
If $R$ is a $\mathbb{Q}$-algebra and $F$ lies in $\mathbb{Z}[[\ul{t}_{\mathbb{N}}]]$, then we claim
\begin{equation}\label{eq.power-expansion} F^N=\prod_{i=1}^\infty F(t_1^i, t_2^i, \ldots)^{p_i' \circ N}\end{equation}
where on the right the powers are interpreted not as pre-$\lambda$ powers but rather classically using binomial coefficients, i.e., writing $F(t_1^i, t_2^i, \ldots)=1+a$, we interpret $F(t_1^i, t_2^i, \ldots)^{p_i' \circ N}$ as
\[ (1+a)^{p_i' \circ N} = \sum_{k\geq 0} \binom{p_i' \circ N}{k}a^k =\sum_{k\geq 0}\frac{(p_i' \circ N)(p_i' \circ N -1 )\cdots (p_i' \circ N - k +1)}{k!} a^k.\]
Indeed, the formula \eqref{eq.power-expansion} follows by combining \cite[Remark~2.6.3]{Howe.RandomMatrixStatisticsAndZeroesOfLFunctionsViaProbabilityInLambdaRings} and \cite[Lemma~2.8]{Howe.MotivicRandomVariablesAndRepresentationStabilityIConfigurationSpaces}.

\subsection{Pre-\texorpdfstring{\boldmath$\lambda$}{\textlambda} probability}\label{ss.pre-lambda-prob}
\subsubsection{}
We recall from \cite[Definition~3.1.1]{Howe.RandomMatrixStatisticsAndZeroesOfLFunctionsViaProbabilityInLambdaRings} that a pre-$\lambda$ probability space is a pre\nobreakdash-$\lambda$ ring $R$ equipped with a $\mathbb{Z}$-linear expectation functional $\mathbb{E}: R \rightarrow C$ to another ring $C$ such that $\mathbb{E}[1_R]=1_C$. 

\subsubsection{}
Given a pre-$\lambda$ probability space $(R, \mathbb{E})$, we refer to the elements of $R$ as random variables.
Given a random variable $X \in R$, we recall that the $\Lambda$-distribution of $X$ (\cite[Definition~3.1.2]{Howe.RandomMatrixStatisticsAndZeroesOfLFunctionsViaProbabilityInLambdaRings}) is the $\mathbb{Z}$-linear function $\mu_X: \Lambda \rightarrow C$, $f \mapsto \mathbb{E}[f \circ X]$. The $\sigma$-moment generating function of $X$ (\cite[Definition~3.2.1]{Howe.RandomMatrixStatisticsAndZeroesOfLFunctionsViaProbabilityInLambdaRings}) is
\begin{equation}\label{eq.smg-monomial} \mathbb{E}[\Exp_{\sigma}(Xh_1)] \in \Lambda_C^\wedge \end{equation}
where $h_1=p_1=e_1=t_1+t_2+t_3 + \ldots$ is the first complete, power sum, and elementary symmetric function, $\Exp_{\sigma}(Xh_1)$ is computed in $\Lambda_R^\wedge$ (or equivalently in $R[[\ul{t}_{\mathbb{N}}]]$), and expectation is applied coefficient-wise to produce an element of $\Lambda_C^\wedge$. Explicitly, in the basis of monomial symmetric functions we have
\begin{equation}\label{eq.fmg-monomial} \mathbb{E}[\Exp_{\sigma}(Xh_1)] = \sum_{\tau} \mathbb{E}[h_\tau \circ X]m_\tau.\end{equation}
By \cite[Lemma~3.2.2]{Howe.RandomMatrixStatisticsAndZeroesOfLFunctionsViaProbabilityInLambdaRings}, the $\Lambda$-distribution of $X$ and the $\sigma$-moment generating function of $X$ uniquely determine one another via the Hall inner product.

\subsubsection{}
Recall from \cite[\S3.2]{Howe.RandomMatrixStatisticsAndZeroesOfLFunctionsViaProbabilityInLambdaRings} that, using the powers as in \S\ref{sss.powers}, we may consider other moment generating functions $\mathbb{E}[F^X]$.
In particular, we will consider the falling moment generating function $\mathbb{E}[(1+h_1)^X]$ as in \cite[Example~3.2.3]{Howe.RandomMatrixStatisticsAndZeroesOfLFunctionsViaProbabilityInLambdaRings}. For $c_\tau$ the configuration symmetric functions of \cite[Example~2.6.6]{Howe.RandomMatrixStatisticsAndZeroesOfLFunctionsViaProbabilityInLambdaRings}, we have 
\[ \mathbb{E}[(1+h_1)^X]=\sum_{\tau} \mathbb{E}[c_\tau \circ X]m_\tau. \]
 
\section{\texorpdfstring{$\Lambda$}{\textLambda}-distributions of finite groups}\label{s.lambda-finite-groups}

In this section we compute the explicit $\Lambda$-distributions of some simple matrix groups (Theorem~\ref{thm.exact-distributions-of-finite-groups}), and give an approximate formula for generalized permutation groups with applications to asymptotics in natural families (Theorem~\ref{thm.asymptotic-distributions-of-finite-groups}).
We make our setup precise in \S\ref{ss.matrix-group-dist}, then the main results are stated and discussed in \S\ref{ss.main-comp-results}.
These results are proved in \S\ref{ss.comp-proofs} after introducing some new tools for computing $\Lambda$-distributions in \S\ref{ss.comp-tools} (including our analog of Molien's formula, Lemma~\ref{lemma.MolienAnalog}).

\subsection{Matrix group \texorpdfstring{\boldmath$\Lambda$}{\textLambda}-distributions}\label{ss.matrix-group-dist}
Our main results describe $\Lambda$-distributions in the following setting: let $G \leq \mathrm{GL}_n(\mathbb{C})$ be a finite group.
Then, we can consider the $\lambda$-ring of functions from $G$ to $\mathbb{Z}[\mathbb{C}]$, where the $\lambda$-ring structure is pointwise (cf.\ \cite[Remark~4.1.1]{Howe.RandomMatrixStatisticsAndZeroesOfLFunctionsViaProbabilityInLambdaRings}).
It is equipped with a $\mathbb{C}$-valued expectation $\mathbb{E}$ by 
\[ \mathbb{E}[f]=\frac{1}{\#G}\sum_{g \in G} f(g)_1 \]
where, for $\mathbf{a}=\sum a_i [z_i] \in \mathbb{Z}[\mathbb{C}]$, $\mathbf{a}_1 := \sum a_i z_i \in \mathbb{C}$.
These definitions put us in the setting of \S\ref{ss.pre-lambda-prob}, and we write $X_G$ for the random variable which sends $g \in G$ to the formal sum of its eigenvalues (with multiplicity) in $\mathbb{Z}[\mathbb{C}]$. 

\begin{remark} As in \cite[\S4.1]{Howe.RandomMatrixStatisticsAndZeroesOfLFunctionsViaProbabilityInLambdaRings}, one could replace the $\lambda$-ring of $\mathbb{Z}[\mathbb{C}]$-valued functions on $G$ here with the more abstract $\lambda$-ring $K_0(\mathrm{Rep}\, G)$, the Grothendieck ring of representations of $G$, on which we have a $\mathbb{Z}$-valued expectation sending $[V]$ to the multiplicity of the trivial representation in $V$.
The random variable $X_G$ corresponds to the standard representation $[\mathbb{C}^n]$ in this setup; in particular, this explains why the $\sigma$-moment generating functions appearing below have integer coefficients. 
\end{remark}

\subsection{The main results}\label{ss.main-comp-results}

The following theorem gives exact formulas for the $\Lambda$-distributions of some natural families of finite matrix groups. 

\begin{maintheorem}[Exact computations of finite matrix group $\Lambda$-distributions]\label{thm.exact-distributions-of-finite-groups}\hfill
\begin{enumerate}
    \item For $\mathbb{Z}/n\mathbb{Z}$ embedded in $\mathrm{GL}_1(\mathbb{C})$ by $j \mapsto e^{j \frac{2\pi i}{n}}$, 
    \[ \mathbb{E}[\Exp_{\sigma}(X_{\mathbb{Z}/n\mathbb{Z}} h_1)]=\sum_{\tau} \left(\begin{cases} 1 & \textrm{if $n \mid |\tau|$} \\ 0 & \textrm{otherwise}  \end{cases}\right)m_\tau. \]
    \item For $D_n$ the dihedral group with $2n$ elements, embedded in $\mathrm{GL}_2(\mathbb{C})$ by its usual action on the complex plane, 
     \begin{multline*}
     \mathbb{E}[\Exp_{\sigma}(X_{D_n} h_1)]=\\ \frac{1}{2}\left(\sum_{\tau=(\tau_1, \ldots, \tau_s)} \# \left\{ (r_1, \ldots, r_s)\, |\, 0 \leq r_i \leq \tau_i \textrm{ and } n \mid \Bigl(|\tau| -2\sum r_i\Bigr) \right\} m_\tau \right)\\ + \frac{1}{2} \Exp_{\sigma}(p_2).
     \end{multline*}
    \item For $P_n$ the $n$-qubit Pauli group (see \S\ref{ss.pauli-groups}), embedded in $\mathrm{GL}_{2^n}(\mathbb{C})=\mathrm{GL}((\mathbb{C}^2)^{\otimes n})$ by its standard matrix realization, 
    \begin{multline*}
    \mathbb{E}[\Exp_{\sigma}(X_{P_n}h_1)] = \\
    \sum_{\tau=(\tau_1, \ldots, \tau_s)} m_\tau \cdot
    \begin{cases} \frac{A}{4^n}  & \textrm{ if $|\tau|\equiv 0 \mod 4$, some $\tau_r$ is odd} \\
    B + \frac{A-B}{4^n}  & \textrm{ if $|\tau|\equiv 0 \mod 4$, all $\tau_r$ are even} \\
    0 & \textrm{ if $|\tau|\not\equiv 0 \mod 4$}.   
    \end{cases}\\
    \textrm{where } A=\prod_{r=1}^s \binom{2^n + \tau_r - 1}{\tau_r},\; B=\prod_{r=1}^s \binom{2^{n-1} + \frac{\tau_r}{2} - 1}{\frac{\tau_r}{2}}.
    \end{multline*}
    Equivalently, the coefficient of $m_\tau$ in $\mathbb{E}[\Exp_{\sigma}(X_{P_n}h_1)] $ is zero if $4 \nmid |\tau|$ and otherwise agrees with the coefficient of $m_\tau$ in 
    \[ \frac{1}{4^n}\Exp_{\sigma}(2^n p_1) + \Bigl(1-\frac{1}{4^n}\Bigr)\Exp_{\sigma}(2^{n-1}p_2). \]
\end{enumerate}
\end{maintheorem}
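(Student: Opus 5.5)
The common starting point for all three parts is the following pointwise computation. For $g\in G$ with eigenvalues $\lambda_1,\dots,\lambda_n$, we have $X_G(g)=\sum_j[\lambda_j]$, and hence
\[ \Exp_\sigma(X_G(g)h_1)=\prod_{j}\prod_{i}\frac{1}{1-\lambda_j t_i}=\prod_i \det(1-g\,t_i)^{-1}. \]
This uses the factorization \eqref{eq.exp-power-sum-expansion} together with $p_k\circ [\lambda]=[\lambda^k]$ and $p_k\circ t_i=t_i^k$, and then applies $(\cdot)_1$. Averaging over $G$ gives a multivariable Molien formula:
\[ \mathbb{E}[\Exp_\sigma(X_Gh_1)]=\frac{1}{\#G}\sum_{g}\prod_i\det(1-gt_i)^{-1}. \]
Extracting coefficients, the coefficient of $t^\tau$ is $\frac{1}{\#G}\sum_g\prod_r h_{\tau_r}(\lambda(g))$. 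This is the dimension of the $G$-invariants in $\bigotimes_r \mathrm{Sym}^{\tau_r}\mathbb{C}^n$, which matches \eqref{eq.fmg-monomial}. I expect this is exactly the role of the Molien analog, Lemma~\ref{lemma.MolienAnalog}. Each part then reduces to evaluating this average.

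\emph{Part (1).} Here $\prod_i(1-\zeta^j t_i)^{-1}=\sum_\tau \zeta^{j|\tau|}m_\tau$, since every monomial $t^\alpha$ appears with coefficient $\zeta^{j|\alpha|}$. Averaging over $j$ produces the root-of-unity filter, which is $1$ when $n\mid|\tau|$ and $0$ otherwise.

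\emph{Part (2).} I split the sum into rotations and reflections. A rotation by $\zeta^j$ has eigenvalues $\zeta^j,\zeta^{-j}$. The coefficient of $t^\tau$ in $\prod_i(1-\zeta^jt_i)^{-1}(1-\zeta^{-j}t_i)^{-1}$ is $\sum_{0\le r_i\le\tau_i}\zeta^{j(|\tau|-2\sum r_i)}$, where $r_i$ is the power of $\zeta^{-j}$ chosen from the $i$-th factor. Averaging over the $n$ rotations counts the tuples with $n\mid |\tau|-2\sum r_i$, and the weight $n/2n$ gives the factor $\frac12$. Each of the $n$ reflections has eigenvalues $\pm1$, so it contributes $\prod_i(1-t_i^2)^{-1}=\Exp_\sigma(p_2)$, again with total weight $\frac12$.

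\emph{Part (3).} Since $P_n$ has order $4^{n+1}$, I organize the sum by cosets of the center $\{\pm1,\pm i\}$. For $c$ a central scalar, the coefficient of $t^\tau$ at $cg$ equals $c^{|\tau|}$ times the coefficient at $g$. Averaging over the center therefore kills every coefficient with $4\nmid|\tau|$, and otherwise reduces to averaging over the $4^n$ Pauli strings $\sigma_{a_1}\otimes\cdots\otimes\sigma_{a_n}$. The identity string contributes $\prod_i(1-t_i)^{-2^n}=\Exp_\sigma(2^np_1)$, whose $t^\tau$ coefficient is $A$. Each non-identity string is traceless and squares to $1$, so it has eigenvalues $\pm1$, each with multiplicity $2^{n-1}$. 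It therefore contributes $\prod_i(1-t_i^2)^{-2^{n-1}}=\Exp_\sigma(2^{n-1}p_2)$, whose $t^\tau$ coefficient is $B$ when all $\tau_r$ are even and $0$ otherwise. Combining these with weights $\frac1{4^n}$ and $1-\frac1{4^n}$ gives both stated forms.

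The main obstacle I anticipate is this last step: justifying the coset reduction (that $P_n$ contains the full center $\{\pm1,\pm i\}$ and that every non-identity Pauli string, including phases, squares to a scalar in a way that gives the balanced spectrum), which depends on the convention fixed in \S\ref{ss.pauli-groups}. Everything else is routine coefficient extraction.
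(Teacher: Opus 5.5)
Your proposal is correct and matches the paper's proof essentially step for step. The paper also reduces to the Molien analog (Lemma~\ref{lemma.MolienAnalog}), splits $D_n$ into rotations and reflections (the reflections giving $\frac12\Exp_\sigma(p_2)$), and for $P_n$ uses that each non-identity string has eigenvalues $\pm1$ with multiplicity $2^{n-1}$, while the scalar $u$ acts by $t_i\mapsto ut_i$, so averaging over $u$ kills degrees not divisible by $4$.

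The differences are cosmetic:
in parts (1) and (2) the paper phrases your root-of-unity filter through Lemma~\ref{lemma.multiplicity-formula} and the decomposition $\mathbb{C}^2=\chi\oplus\chi^{-1}$;
the obstacle you anticipate in part (3) is immediate from \S\ref{ss.pauli-groups}, which defines $P_n$ as the set of matrices $u(M_1\otimes\cdots\otimes M_n)$ with $u\in\{\pm1,\pm i\}$;
and your pointwise use of \eqref{eq.exp-power-sum-expansion} over $\mathbb{Z}[\mathbb{C}]$, which is not a $\mathbb{Q}$-algebra, is harmless, or can be replaced by $\Exp_\sigma([\zeta]t_i)=(1-[\zeta]t_i)^{-1}$ as in the paper's first proof of the lemma.
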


\begin{remark}
    As will be clear from the proof of Theorem~\ref{thm.exact-distributions-of-finite-groups}, if a matrix group contains the scalar matrices corresponding to $n$th roots of unity, then all terms of degree not divisible by $n$ in the $\sigma$-moment generating function are identically zero. 
\end{remark}

\begin{example}\label{example.two-asymptotics}
From Theorem~\ref{thm.exact-distributions-of-finite-groups}-(1) and (2), one can also deduce asymptotic distributions: in the case of cyclic groups, Theorem~\ref{thm.exact-distributions-of-finite-groups}-(1) implies
\[ \lim_{n \rightarrow \infty} \mathbb{E}[\Exp_{\sigma}(X_{\mathbb{Z}/n\mathbb{Z}} h_1)] = 1 \]
where the limit is computed coefficient-wise.
Similarly, in the case of dihedral groups, Theorem~\ref{thm.exact-distributions-of-finite-groups}-(2) gives 
\[ \lim_{n \rightarrow \infty} \mathbb{E}[\Exp_{\sigma}(X_{D_n} h_1)] = \frac{1}{2} (1+\Exp_{\sigma}(p_2)). \]
\end{example}

The following theorem gives an asymptotic computation of the $\Lambda$-distributions of some natural families of groups that are closely related to symmetric groups. 

\begin{maintheorem}[Asymptotic computations of some finite group $\Lambda$-distributions]\label{thm.asymptotic-distributions-of-finite-groups}\hfill
\begin{enumerate}
     \item Let $H\leq \Sigma_n$ be a $k$-transitive subgroup, let $\mu_r$ be the group of $r$th roots of unity in $\mathbb{C}$, and let $G= \mu_r^n \rtimes H$, embedded in $\mathrm{GL}_n(\mathbb{C})$ via generalized permutation matrices.
     Then, modulo the $m_\tau$ with $|\tau|>k$,
    \[ \mathbb{E}[\Exp_{\sigma}(X_G h_1)] \equiv \Exp_{\sigma}(h_r + h_{2r} + h_{3r} + \ldots). \]      
    \item Let $G=W(D_n) \leq \mathrm{GL}_n(\mathbb{C})$ be the Weyl group of the $D_n$ root system, or equivalently the group of generalized permutation matrices $(W \rtimes \Sigma_n) \leq \mu_2^n \rtimes \Sigma_n$ for $W \leq \mu_2^n$ the subgroup of tuples whose product is $1$.
    Then, modulo the $m_\tau$ with $|\tau|>n$, 
    \[ \mathbb{E}[\Exp_{\sigma}(X_G h_1)] \equiv \Exp_{\sigma}(h_2 + h_{4} + h_{6} + \ldots)  + h_n.\] 

\end{enumerate}
\end{maintheorem}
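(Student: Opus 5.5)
Both parts rest on one identity. For a finite matrix group, $h_\tau\circ X_G$ evaluated at $g$ is the character of $\mathrm{Sym}^{\tau_1}V\otimes\mathrm{Sym}^{\tau_2}V\otimes\cdots$ at $g$, where $V=\mathbb{C}^n$ is the standard representation. Taking the monomial expansion \eqref{eq.fmg-monomial} and summing over $\tau$ gives the Molien-type formula
\[ \mathbb{E}[\Exp_\sigma(X_Gh_1)]=\frac{1}{\#G}\sum_{g\in G}\prod_{j}\det(1-t_jg)^{-1}. \]
I take this to be the analog of Molien's formula (Lemma~\ref{lemma.MolienAnalog}). The coefficient of $m_\tau$ is then the number of $G$-invariants in $\bigotimes_r \mathrm{Sym}^{\tau_r}V$, that is, the number of $G$-orbits on a suitable basis when $G$ acts monomially. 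I will use this orbit-counting interpretation directly.

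For part (1), the space $\bigotimes_r\mathrm{Sym}^{\tau_r}V$ has a monomial basis indexed by tuples of multisets $(S_1,\dots,S_s)$ on $\{1,\dots,n\}$ with $|S_r|=\tau_r$. The group $G=\mu_r^n\rtimes H$ sends each basis vector to a root of unity times another basis vector. The invariant dimension therefore equals the number of $H$-orbits of such tuples whose stabilizer acts by a trivial character on the basis vector. Since $\mu_r^n\subseteq G$, a monomial survives averaging over $\mu_r^n$ exactly when each variable $x_i$ appears, in total over all factors, with exponent divisible by $r$. The $\mu_r^n$-invariant part is thus spanned by monomials in which each index $i$ carries a vector $(a_{i1},\dots,a_{is})$ with $\sum_r a_{ir}\in r\mathbb{Z}$. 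The remaining $H$-invariants are counted by $H$-orbits of such configurations; no sign characters arise because $H$ acts by honest permutations of monomials. A configuration uses at most $|\tau|\le k$ indices $i$ with nonzero vector. By $k$-transitivity, $H$-orbits of such configurations coincide with $\Sigma_n$-orbits, and these in turn are multisets of nonzero vectors $a\in\mathbb{Z}_{\ge0}^s$ with $|a|\in r\mathbb{Z}$ and $\sum a=\tau$. (Here $n\ge k\ge|\tau|$, so enough indices are available.) Counting such multisets is exactly the coefficient of $m_\tau$ in
\[ \prod_{a}\frac{1}{1-t^a}=\Exp_\sigma\Bigl(\sum_{m\ge1}h_{rm}\Bigr), \]
since $h_{rm}=\sum_{|a|=rm}t^a$ and $\Exp_\sigma$ of a sum of monomials is the product of the factors $(1-t^a)^{-1}$. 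The one point needing care is that $k$-transitivity gives transitivity on ordered $k$-tuples of distinct points. That is enough to identify $H$-orbits with $\Sigma_n$-orbits on configurations supported on at most $k$ points: choose an ordering of the support; two configurations of the same shape differ by a map between supports, which extends to an element of $H$.

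For part (2), I run the same count with $W\le\mu_2^n$. The $W$-invariant monomials are those whose exponent vector $(e_1,\dots,e_n)$, where $e_i=\sum_r a_{ir}$, satisfies: all $e_i$ have the same parity. This is because the characters of $\mu_2^n$ trivial on $W$ are $1$ and the product character. Then I take $\Sigma_n$-orbits, with $n$-transitivity automatic. The all-even configurations give $\Exp_\sigma(h_2+h_4+\cdots)$ exactly as in part (1), with $|\tau|\le n$ imposing no constraint. The all-odd configurations require every one of the $n$ indices to carry an odd, hence nonzero, vector, which forces $|\tau|\ge n$. Modulo $|\tau|>n$ this leaves only $|\tau|=n$ with every $e_i=1$, i.e.\ each index carries a unit vector $\epsilon_r$ with multiplicity pattern $\tau$.

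The main obstacle is the $\Sigma_n$-sign issue in this odd case: the basis monomial $\prod_i x_i$-type vectors are permuted without sign, but I must check that the stabilizer in $W\rtimes\Sigma_n$ of such a monomial acts trivially. Elements of $W$ have sign product $1$, which handles the $W$ part. Each orbit (a function from indices to $\{1,\dots,s\}$ with fibers of sizes $\tau_r$) is a single $\Sigma_n$-orbit, so it contributes $1$. This gives coefficient $1$ on $m_\tau$ for every $|\tau|=n$, i.e.\ $h_n=\sum_{|\tau|=n}m_\tau$, completing part (2).
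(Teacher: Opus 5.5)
Your proposal is correct and follows essentially the same argument as the paper: you reduce via the Molien/multiplicity formula to counting orbits on labelings of $\{1,\ldots,n\}$ by vectors in $\mathbb{Z}_{\geq 0}^s$, use $k$-transitivity (with $|\tau|\leq k\leq n$) to identify these orbits with multisets of nonzero vectors summing to $\tau$, and then impose the divisibility condition from $\mu_r^n$ (respectively the common-parity condition from $W$, with the all-odd case forcing $|\tau|=n$ and producing $h_n$). The only difference is the order of the steps: you take $\mu_r^n$- (or $W$-)invariants first and then count $H$-orbits on the surviving monomials, whereas the paper counts $H$-orbits first and then checks which orbits are fixed by the diagonal subgroup.
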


\begin{example}\label{example.alternating}
    The alternating group $A_n \leq \Sigma_n$ is $(n-2)$-transitive.
    In particular, Theorem~\ref{thm.asymptotic-distributions-of-finite-groups}-(1) implies that, as $n \rightarrow \infty$, the asymptotic $\Lambda$-distribution of $A_n$ agrees with the asymptotic $\Lambda$-distribution of $\Sigma_n$ and is asymptotically Poisson (see Example~\ref{example.poisson}). 
\end{example}

\begin{remark}
    The Weyl group of the root system\footnote{We caution the reader that we write $A_n$ both for the usual root system of that name and for the alternating group, and similarly we write $D_n$ both for the usual root system of that name and for the dihedral group. The meaning of the symbol should be clear from the surrounding context.} $A_n$ is the symmetric group $\Sigma_{n+1}$, and the Weyl group of the root systems $B_n$ or $C_n$ is the generalized symmetric group $\mu_2^n \rtimes \Sigma_n$.
    Thus, Theorem~\ref{thm.asymptotic-distributions-of-finite-groups}-(1) includes the computation of the asymptotic $\Lambda$-distributions for each of these infinite families of Weyl groups.
    Combined with Theorem~\ref{thm.asymptotic-distributions-of-finite-groups}-(2), we have thus computed the asymptotic $\Lambda$-distribution for the Weyl groups of each infinite family of classical root systems --- note that, in Theorem~\ref{thm.asymptotic-distributions-of-finite-groups}-(2), the $h_n$ term disappears if we work modulo terms of degree $\geq n$ and thus in both the $B_n/C_n$ and $D_n$ cases the asymptotic $\sigma$-moment generating function is $\Exp_\sigma(h_2 + h_4 + \ldots)$.  
    Only the case of symmetric groups, i.e., the root systems $A_n$, was previously known, where the asymptotic $\sigma$-moment generating function is $\Exp_\sigma(h_1 + h_2 + \ldots)$ (this can also be viewed as a special case of Theorem~\ref{thm.asymptotic-distributions-of-finite-groups}-(1)).
\end{remark}

\begin{remark}
One of the main questions driving this work is: Which natural sequences of matrix groups $G_n$ have $\sigma$-moment generating functions converging to an interesting ``simple'' limit as in \cite[Theorem~A]{Howe.RandomMatrixStatisticsAndZeroesOfLFunctionsViaProbabilityInLambdaRings}?
Example~\ref{example.two-asymptotics} and Theorem~\ref{thm.asymptotic-distributions-of-finite-groups} provide new examples, while Theorem~\ref{thm.exact-distributions-of-finite-groups}-(3) provides an example of a natural family where the $\Lambda$-distributions/$\sigma$-moment generating functions do not converge. 
\end{remark}

\subsection{Tools}\label{ss.comp-tools}

We first recall a simple multiplicity formula. 

\begin{lemma}\label{lemma.multiplicity-formula}
    Let $V$ be a finite dimensional complex vector space and let $G \leq \mathrm{GL}(V)$ be a compact group.
    Then 
    \[ \mathbb{E}[\Exp_{\sigma}(X_G h_1)]=\sum_{\tau} \dim_\mathbb{C} (\mathrm{Sym}^\tau V)^G \cdot m_\tau.\]
\end{lemma}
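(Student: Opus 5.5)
By \eqref{eq.fmg-monomial}, the coefficient of $m_\tau$ in $\mathbb{E}[\Exp_{\sigma}(X_G h_1)]$ is $\mathbb{E}[h_\tau \circ X_G]$, where $h_\tau=h_{\tau_1}h_{\tau_2}\cdots$. So the whole proof reduces to identifying this expectation with $\dim_\mathbb{C}(\mathrm{Sym}^\tau V)^G$. Here we read $\mathrm{Sym}^\tau V$ as $\mathrm{Sym}^{\tau_1}V\otimes\mathrm{Sym}^{\tau_2}V\otimes\cdots$, and we use the Haar expectation for compact $G$, as in the introduction.

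First we compute $h_\tau\circ X_G$ pointwise. For $g\in G$ with eigenvalues $z_1,\dots,z_d$ (with $d=\dim V$), we have $X_G(g)=\sum_j [z_j]$ in the $\lambda$-ring $\mathbb{Z}[\mathbb{C}]$. Plethysm in $\mathbb{Z}[\mathbb{C}]$ is determined by $p_i\circ\sum a_j[z_j]=\sum a_j[z_j^i]$, and these elements $[z]$ are line-like. It follows that $f\circ\sum_j[z_j]=f([z_1],\dots,[z_d],0,0,\dots)$, i.e. $f$ evaluated in the monoid ring. To justify this we check it on the power sums, which generate $\Lambda\otimes\mathbb{Q}$. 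Both sides are ring homomorphisms in $f$, and $\mathbb{Z}[\mathbb{C}]$ is torsion free, so agreement on the $p_i$ suffices.

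Applying the map $\mathbf a\mapsto\mathbf a_1$, which is a ring homomorphism $\mathbb{Z}[\mathbb{C}]\to\mathbb{C}$ since $[z][w]=[zw]$, gives
\[ (h_\tau\circ X_G)(g)_1=\prod_i h_{\tau_i}(z_1,\dots,z_d). \]
Now $h_k(z_1,\dots,z_d)$ is the trace of $g$ on $\mathrm{Sym}^kV$. This is seen by diagonalizing $g$, which is possible since $G$ is compact and hence $g$ is semisimple; the monomials of degree $k$ in an eigenbasis form an eigenbasis of $\mathrm{Sym}^kV$. Traces multiply under tensor products, so $(h_\tau\circ X_G)(g)_1=\chi_{\mathrm{Sym}^\tau V}(g)$.

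Finally we average over $G$. By \eqref{eq.fmg-monomial} and the definition of $\mathbb{E}$,
\[ \mathbb{E}[h_\tau\circ X_G]=\int_G \chi_{\mathrm{Sym}^\tau V}(g)\,dg, \]
where $dg$ is the Haar probability measure. The operator $\int_G g\,dg$ is the projection of $\mathrm{Sym}^\tau V$ onto its $G$-invariants, so its trace is $\dim(\mathrm{Sym}^\tau V)^G$. This is the standard orthogonality fact. For finite $G$ the integral is simply the average over group elements, which is exactly the paper's expectation.

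The only step that needs genuine care is the plethysm computation in $\mathbb{Z}[\mathbb{C}]$, namely that $h_k\circ\sum[z_j]$ is the complete symmetric polynomial in the $[z_j]$. We handle it with the power sum and torsion-free argument above. Alternatively, we can pass to $K_0(\mathrm{Rep}\,G)$ as in the earlier Remark, where $h_k\circ[V]=[\mathrm{Sym}^kV]$ by definition of the $\lambda$-structure.
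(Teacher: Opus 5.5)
Your proof is correct and follows essentially the same route as the paper, which simply defers to the opening of the proof of Howe's Theorem~4.2.1. That argument likewise reads off the $m_\tau$-coefficient as $\mathbb{E}[h_\tau\circ X_G]$ and identifies it with the multiplicity of the trivial representation in $\mathrm{Sym}^\tau V$. The only difference is presentation: you check the plethysm in $\mathbb{Z}[\mathbb{C}]$ on power sums and then use character orthogonality, whereas in the $K_0(\mathrm{Rep}\,G)$ model (your closing alternative) $h_\tau\circ[V]=[\mathrm{Sym}^\tau V]$ and $\mathbb{E}[[W]]=\dim W^G$ hold by construction.
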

\begin{proof}
    This follows from the argument at the beginning of the proof of \cite[Theorem~4.2.1]{Howe.RandomMatrixStatisticsAndZeroesOfLFunctionsViaProbabilityInLambdaRings}.
\end{proof}

We now give an analog of Molien's formula in classical invariant theory (see, e.g., \cite[\S2]{Stanley.InvariantsOfFiniteGroupsAndTheirApplicationsToCombinatorics}).
Let $G \leq \mathrm{GL}_n(\mathbb{C})$ be a finite group.
For $g \in G$, we write $[g]$ for the multiset of eigenvalues of $g$, which we may also view as an element of the pre-$\lambda$ ring $\mathbb{Z}[\mathbb{C}]$. 
\begin{lemma}\label{lemma.MolienAnalog}
    For $G \leq \mathrm{GL}_n(\mathbb{C})$ a finite group, the following identity holds in  $\Lambda_{\mathbb{C}}^\wedge$:
    \begin{align*}
    \mathbb{E}[\Exp_{\sigma}(X_G h_1)] 
        &=   \frac{1}{\#G}\sum_{g \in G}\left( \prod_{i \geq 1}\prod_{\zeta \in [g]}\frac{1}{1-\zeta t_i} \right)\\
     &=   \frac{1}{\#G}\sum_{g \in G} \left(\prod_{i \geq 1} \det\left(\frac{1}{1 - t_i g}\right) \right).
 \end{align*}
\end{lemma}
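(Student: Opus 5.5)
The plan is to compute $\Exp_\sigma(X_G h_1)$ pointwise on $G$ and then average; this works because expectation is applied coefficient-wise and the $\lambda$-ring structure on $\mathbb{Z}[\mathbb{C}]$-valued functions is pointwise. Fix $g \in G$ and write $[g]=\sum_{\zeta\in[g]}[\zeta] \in \mathbb{Z}[\mathbb{C}]$. The value of $X_G h_1$ at $g$ is then $\sum_{\zeta \in [g]}\sum_{i\geq 1}[\zeta]t_i$ in $\mathbb{Z}[\mathbb{C}][[\ul{t}_{\mathbb{N}}]]$. Evaluation at $g$ is a map of pre-$\lambda$ rings, so it commutes with the $h_k\circ(-)$, and hence with $\Exp_\sigma$.

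The key step is that $\Exp_\sigma$ converts sums to products, $\Exp_\sigma(F+G)=\Exp_\sigma(F)\Exp_\sigma(G)$. This is the standard property of the $\sigma$-operations, $h_k\circ(F+G)=\sum_{a+b=k}(h_a\circ F)(h_b\circ G)$. Using it, we reduce to computing
\[ \Exp_\sigma([\zeta]t_i)=\sum_{k\geq 0} h_k\circ([\zeta]t_i). \]
Since $[\zeta]t_i$ is a rank-one element in the $\lambda$-ring, i.e.\ $p_m\circ([\zeta]t_i)=[\zeta^m]t_i^m$, it is multiplicative. Therefore $h_k\circ([\zeta]t_i)=[\zeta^k]t_i^k$, and
\[ \Exp_\sigma([\zeta]t_i)=\sum_k [\zeta^k]t_i^k. \]
To justify this cleanly we can work after applying $(-)_1$: alternatively use \eqref{eq.exp-power-sum-expansion}, giving $\exp\bigl(\sum_m \tfrac1m [\zeta^m]t_i^m\bigr)$. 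A cleaner route is to note that the map $\mathbb{Z}[\mathbb{C}]\to\mathbb{C}$, $\mathbf a\mapsto \mathbf a_1$, is a ring map (because $[z][w]=[zw]$), though not a $\lambda$-map. So we first compute inside $\mathbb{Z}[\mathbb{C}][[\ul t]]$ the identity $h_k\circ[\zeta]t_i=[\zeta^k]t_i^k$. This follows because $[\zeta]t_i$ is the image of the symmetric function $t_1$ under a $\lambda$-ring map, and $h_k(t_1)=t_1^k$.

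Applying $(-)_1$ coefficient-wise, which is a ring homomorphism and continuous for the $t$-adic topology, we obtain
\[ \Exp_\sigma(X_Gh_1)(g)_1=\prod_i\prod_{\zeta\in[g]}\sum_k \zeta^k t_i^k=\prod_i\prod_\zeta \frac{1}{1-\zeta t_i}. \]
Averaging over $g$ with weight $1/\#G$ gives the first equality of the lemma. The second equality is the determinant identity $\det(1-t_ig)=\prod_{\zeta\in[g]}(1-\zeta t_i)$, which holds since $g$ is diagonalizable (it has finite order), or simply via the characteristic polynomial.

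The main obstacle is bookkeeping: making sure that the infinite product over $i$ and the sum over $k$ are legitimate in the inverse-limit topology, and that $(-)_1$ commutes with these infinite operations. Both points are handled by working modulo the ideal generated by $t_{N+1},\dots$ and terms of degree greater than $D$, where only finitely many factors and terms are involved.
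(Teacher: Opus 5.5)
Your proposal is correct and is essentially the paper's first proof. You evaluate pointwise at $g$, use that $\Exp_\sigma$ turns sums into products, compute $\Exp_\sigma([\zeta]t_i)=\sum_k[\zeta^k]t_i^k=\frac{1}{1-[\zeta]t_i}$ because $[\zeta]t_i$ is a line element, and then take expectations coefficient-wise. Where you go slightly beyond the paper is in stating explicitly that $\mathbf{a}\mapsto\mathbf{a}_1$ is a ring homomorphism and commutes with the infinite products, since each coefficient involves only finitely many factors; the paper's last sentence uses this implicitly.
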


\begin{remark}
    There is a natural homomorphism $\Lambda_{\mathbb{C}}^\wedge \rightarrow \mathbb{C}[[t]]$ given by specializing $t_1$ to $t$ and $t_i$ to $0$ for all $i \geq 2$.
    Under this homomorphism, $m_\tau\mapsto 0$ unless $\tau=(\tau_1, 0, 0 \ldots)$, in which case $m_\tau \mapsto t^{\tau_1}$, and thus the equality of
    Lemma~\ref{lemma.MolienAnalog} specializes to Molien's formula for the trivial representation (see \cite[Theorem~2.1]{Stanley.InvariantsOfFiniteGroupsAndTheirApplicationsToCombinatorics}),
    \[ \sum_{j \geq 0} \dim (\mathrm{Sym}^j V)^G t^j= \frac{1}{\#G} \sum_{g \in G}\det\left(\frac{1}{1-tg}\right).\]
\end{remark}

\begin{proof}[First proof of Lemma~\ref{lemma.MolienAnalog}]
     The equivalence between the two expressions on the right follows from the usual expansion of the characteristic power series of a matrix, so it suffices to prove the first of these agrees with the $\sigma$-moment generating function appearing on the left. 
     
    Because the $\lambda$-ring structure on the ring of $\mathbb{Z}[\mathbb{C}]$-valued functions on $G$ is defined pointwise, $\Exp_{\sigma}(X_G h_1)$ can be viewed as the function on $G$ sending $g$ to $\Exp_{\sigma}([g] h_1) \in \Lambda_{\mathbb{Z}[\mathbb{C}]}$.
    Since $[g]=\sum_{\zeta \in [g]}[\zeta]$, using the multiplicativity of $\Exp_{\sigma}$ and \cite[Lemma~2.2.4]{Howe.RandomMatrixStatisticsAndZeroesOfLFunctionsViaProbabilityInLambdaRings},  
    \begin{align*}
    \Exp_{\sigma}([g] h_1) &= \Exp_{\sigma}\left(\sum_{\zeta \in [g]} [\zeta]h_1\right)\\
    &= \prod_{\zeta \in [g]} \Exp_{\sigma}\left([\zeta]h_1\right)\\
    &= \prod_{\zeta \in [g]} \Exp_{\sigma}\left(\sum_{i \geq 1}[\zeta]t_i\right) \\
     &=\prod_{[\zeta]\in [g]}\prod_{i \geq 1} \Exp_{\sigma}([\zeta]t_i)
     \\
     &=\prod_{[\zeta]\in [g]}\prod_{i \geq 1} \frac{1}{1-[\zeta]t_i}.
     \end{align*}
    Applying the expectation function coefficient-wise, we obtain the first claimed equality.
\end{proof}
\begin{proof}[Second proof of Lemma~\ref{lemma.MolienAnalog}]
The equivalence between the two expressions on the right follows from the usual expansion of the characteristic power series of a matrix, so it suffices to prove the second of these agrees with the $\sigma$-moment generating function appearing on the left. 

By Lemma~\ref{lemma.multiplicity-formula}, 
       \[ \mathbb{E}[\Exp_{\sigma}(X_G h_1)]=\sum_{\tau} \dim_\mathbb{C} (\mathrm{Sym}^\tau V)^G m_\tau.\]
As $\dim_\mathbb{C} (\mathrm{Sym}^\tau V)^G$ is the multiplicity of the trivial representation $\mathrm{triv}$ in $\mathrm{Sym}^\tau V$, using character theory we can write 
\[ \dim_\mathbb{C} (\mathrm{Sym}^\tau V)^G = \langle \chi_{\mathrm{Sym}^\tau V}, \mathrm{triv} \rangle= \frac{1}{\#G}\sum_{g\in G} \chi_{\mathrm{Sym}^\tau V}(g). \]
Because $\mathrm{Sym}^\tau V = \mathrm{Sym}^{\tau_1 V} \otimes \mathrm{Sym}^{\tau_2} V \otimes \ldots$ and characters multiply over tensor products,
\[ \chi_{\mathrm{Sym}^\tau V}(g) = \chi_{\mathrm{Sym}^{\tau_1} V}(g)\chi_{\mathrm{Sym}^{\tau_2} V}(g)\ldots. \]
Expanding, we find
\begin{align*}
\mathbb{E}[\Exp_{\sigma}(X_G h_1)]&= \frac{1}{\#G}\sum_{g \in G} \prod_{i \geq 1} \Biggl(\sum_{j \geq 0} \chi_{\mathrm{Sym}^j V}(g) t_i^j\Biggr)\\&=\frac{1}{\#G}\sum_{g \in G} \Biggl(\prod_{i \geq 1}  \det\left(\frac{1}{1 - t_i g}\right) \Biggr),
\end{align*}
where we have used the usual identification of the traces of a matrix on symmetric powers with the coefficients of its characteristic power series. 
\end{proof}

\subsubsection{Formulas for permutation representations}
Let $G$ be a finite group acting faithfully on a finite set $S$.
Then, for $V=\mathbb{C}[S]$ the associated permutation representation, we can view $G$ as a subgroup of $\mathrm{GL}(V)$ (if we fix an enumeration of the elements of $S$, then, in the resulting identification $V=\mathbb{C}^n$, $G$ is a subgroup of permutation matrices).
For $\tau=(\tau_1, \tau_2, \dots,\tau_k)$ a partition, we write 
\[ \mathrm{Sym}^\tau(S)= \prod_i S^{\tau_i} / \prod_{i} \Sigma_{\tau_i}. \]
In other words, $\mathrm{Sym}^\tau(S)$ is the set of ordered tuples $(T_1, \ldots, T_k)$ where each $T_i$ is a multiset of $\tau_i$ elements of $S$.
We note that the action of $G$ on $S$ induces a natural action of $G$ on $\mathrm{Sym}^\tau(S)$ for any $\tau$. 

\begin{lemma}\label{lemma.permutation-rep}
    Let $G$ be a finite group acting faithfully on a finite set $S$.
    Then, for $V=\mathbb{C}[S]$, viewing $G$ as a subgroup of $\mathrm{GL}(V)\cong \mathrm{GL}_{\# S}(\mathbb{C})$, 
    \[ \mathbb{E}[\Exp_{\sigma}(X_G h_1)]= \sum_\tau (\textrm{the number of $G$-orbits in $\mathrm{Sym}^\tau S$}) \cdot m_\tau. \]
\end{lemma}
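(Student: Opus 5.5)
The plan is to reduce the statement to Lemma~\ref{lemma.multiplicity-formula}, which gives
\[ \mathbb{E}[\Exp_{\sigma}(X_G h_1)]=\sum_{\tau} \dim_\mathbb{C} (\mathrm{Sym}^\tau V)^G \cdot m_\tau, \]
so it suffices to show, for each partition $\tau$, that $\dim_\mathbb{C}(\mathrm{Sym}^\tau V)^G$ equals the number of $G$-orbits on $\mathrm{Sym}^\tau S$.

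The key step is to identify $\mathrm{Sym}^\tau V$, as a $G$-representation, with the permutation representation $\mathbb{C}[\mathrm{Sym}^\tau S]$. Since $V=\mathbb{C}[S]$ has basis $\{e_s\}_{s\in S}$ permuted by $G$, the space $\mathrm{Sym}^{j}V$ has as a basis the monomials $e_{s_1}\cdots e_{s_j}$. These monomials are indexed by multisets of size $j$ in $S$, and $G$ permutes them via its action on multisets. Because $\mathrm{Sym}^\tau V=\bigotimes_i \mathrm{Sym}^{\tau_i}V$, taking tensor products of these bases gives a basis of $\mathrm{Sym}^\tau V$ indexed by tuples $(T_1,\ldots,T_k)$ of multisets with $\#T_i=\tau_i$, i.e.\ by $\mathrm{Sym}^\tau(S)$. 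The action of $g$ sends each basis vector to the basis vector indexed by $g\cdot(T_1,\ldots,T_k)$, with no scalars appearing. Hence $\mathrm{Sym}^\tau V\cong \mathbb{C}[\mathrm{Sym}^\tau S]$ $G$-equivariantly.

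It then remains to use the standard fact that for a finite $G$-set $Y$, $\dim \mathbb{C}[Y]^G$ is the number of orbits. An invariant vector $\sum_y a_y e_y$ must have $a_y$ constant on orbits, so the orbit sums form a basis of the invariants. Alternatively, one can use Burnside's lemma with the character formula from the second proof of Lemma~\ref{lemma.MolienAnalog}, where $\chi(g)$ is the number of fixed points. Either route is routine.

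There is no real obstacle. The only point needing care is checking that the symmetric-power monomial basis is permuted without signs or scalars, which holds because $G$ acts by permutation matrices, in contrast to exterior powers.
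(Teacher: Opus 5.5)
Your proposal is correct and follows the same route as the paper. Both reduce to Lemma~\ref{lemma.multiplicity-formula}, identify $\mathrm{Sym}^\tau V$ with the permutation representation on $\mathrm{Sym}^\tau(S)$, and use that the orbit sums form a basis of the invariants; you only spell out the monomial-basis identification in more detail.
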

\begin{proof}
    We claim the identity follows from Lemma~\ref{lemma.multiplicity-formula}. Indeed, $\mathrm{Sym}^\tau V$ is naturally identified with the permutation representation on $\mathrm{Sym}^\tau(S)$, and in any permutation representation the dimension of the invariants is equal to the number of orbits: if $A$ is the set acted on by $G$, then an orbit $o=\{a_1, \ldots, a_r\} \subseteq A$ corresponds to the trivial representation in $\mathbb{C}[A]$ spanned by the vector $a_1 + a_2 + \ldots + a_r$.
\end{proof}

\subsection{Proofs of Theorems~\ref{thm.exact-distributions-of-finite-groups} and \ref{thm.asymptotic-distributions-of-finite-groups}}\label{ss.comp-proofs}

\subsubsection{Cyclic groups}
\begin{proof}[Proof of Theorem~\ref{thm.exact-distributions-of-finite-groups}-(1)]
We apply Lemma~\ref{lemma.multiplicity-formula}: $\mathrm{Sym}^{\tau}V$ is a one-dimensional vector space on which $\mathbb{Z}/n\mathbb{Z}$ acts by the character $k \mapsto e^{\frac{2 \pi i k|\tau|}{n}}$.
This is the trivial character exactly when $n \mid |\tau|$, giving the result. 
\end{proof}

\subsubsection{Dihedral groups}
\begin{proof}[Proof of Theorem~\ref{thm.exact-distributions-of-finite-groups}-(2)]
We can write $D_n = R \sqcup \mathbb{Z}/n\mathbb{Z}$, where $R$ consists of $n$ reflections and $\mathbb{Z}/n\mathbb{Z}$ is the subgroup of rotations.
Applying Lemma~\ref{lemma.MolienAnalog}, we can compute separately the contributions from $R$ and $\mathbb{Z}/n\mathbb{Z}$.
Each $r \in R$ has characteristic power series $\det\left(\frac{1}{1-tr}\right)=\frac{1}{1-t^2}$, so the contribution from $R$ is given by 
\[ \frac{n}{2n}\prod_{i\geq 1}\frac{1}{1-t_i^2}=\frac{1}{2}\prod_{i \geq 1} \Exp_{\sigma}(t_i^2)=\frac{1}{2}\Exp_{\sigma}\biggl(\sum_{i \geq 1} t_i^2\biggr)=\frac{1}{2}\Exp_{\sigma}(p_2).\]

On the other hand, by applying Lemma~\ref{lemma.MolienAnalog} also to $\mathbb{Z}/n\mathbb{Z} \subseteq D_n \subseteq \mathrm{GL}_2(\mathbb{C})$ and then comparing with Lemma~\ref{lemma.multiplicity-formula}, we find the contribution from $\mathbb{Z}/n\mathbb{Z}$ is 
\[ \frac{1}{2}\sum_{\tau} \dim_{\mathbb{C}} (\mathrm{Sym}^\tau(\mathbb{C}^2))^{\mathbb{Z}/n\mathbb{Z}} \cdot m_\tau. \]
Writing $\chi$ for the character $k \mapsto e^{\frac{2\pi i k}{n}}$ of $\mathbb{Z}/n\mathbb{Z}$ we have $\mathbb{C}^2=\chi \oplus \chi^{-1}$ as a representation of $\mathbb{Z}/n\mathbb{Z}$.
For each $i$, we then have $\mathrm{Sym}^{\tau_i} \mathbb{C}^2=\sum_{a+b=\tau_i, a,b\geq 0} \chi^{a-b}$.
For $\tau=(\tau_1, \ldots, \tau_s)$, the number of copies of the trivial character in $\mathrm{Sym}^{\tau} \mathbb{C}^2= \mathrm{Sym}^{\tau_1}\mathbb{C}^2 \otimes \mathrm{Sym}^{\tau_2}\mathbb{C}^2 \ldots$ is then 
\[ \# \left\{ (r_1, \ldots, r_s)\, |\, 0 \leq r_i \leq \tau_i \textrm{ and } n \mid \Bigl(|\tau| -2\sum r_i\Bigr) \right\}, \]
where the tuple $(r_1, \ldots, r_s)$ here corresponds to tensoring the character spaces for $r_1 + (\tau_1 - r_1) = \tau_1, r_2 + (\tau_2 - r_2)$, etc., whence we obtain the character $\chi^{|\tau|- 2 \sum r_i}$, which is trivial if and only if the exponent is divisible by $n$. 

Combining the contributions from $R$ and $\mathbb{Z}/n\mathbb{Z}$ yields the claimed formula. 
\end{proof}

\subsubsection{Pauli groups}\label{ss.pauli-groups}
The $n$-qubit Pauli group is the subgroup of $\mathrm{GL}(\mathbb{C}^2)^{\otimes n})$ consisting of the matrices 
\begin{multline*}
u (M_1 \otimes M_2 \otimes \ldots \otimes M_n) \textrm{ where } u \in \{\pm 1, \pm i\} \textrm{ and } M_j \in \{I, X, Y, Z\} \textrm{ for } \\ 
 I=\begin{bmatrix}1 & 0 \\ 0 & 1 \end{bmatrix},\; X = \begin{bmatrix} 0 & 1 \\ 1 & 0\end{bmatrix},\; Y = \begin{bmatrix} 0 & i \\ -i & 0\end{bmatrix},\textrm{ and } Z=\begin{bmatrix}1 & 0 \\ 0 & -1\end{bmatrix}.
 \end{multline*}

\begin{proof}[Proof of Theorem~\ref{thm.exact-distributions-of-finite-groups}-(3)]
With notation as above, we claim that if any $M_i \neq I$, then $M_1 \otimes M_2 \otimes \ldots \otimes M_n$ has characteristic power series $\left(\frac{1}{1-t^2}\right)^{2^{n-1}}$: to see this, first note that each of $X, Y, $ and $Z$ has eigenvalues $\pm 1$, while $I$ has eigenvalue $1$ with multiplicity $2$, and the eigenvalues of the tensor product are given by the $2^n$ possible products given by selecting one of the eigenvalues of each $M_i$.
If at least one of the matrices is not $I$, then this gives $2^{n-1}$ products of $1$ and $2^{n-1}$ products of $-1$, so that the characteristic power series is $(\frac{1}{1-t})^{2^{n-1}}(\frac{1}{1+t})^{2^{n-1}}=\left(\frac{1}{1-t^2}\right)^{2^{n-1}}$ as claimed.
If $M_1=M_2=\ldots=M_n=I$ then the characteristic power series is $(\frac{1}{1-t})^{2^n}$.
Multiplying by a scalar $u$ has the effect of replacing $t$ with $ut$ in these characteristic power series so, applying Lemma~\ref{lemma.MolienAnalog}, we find
\begin{multline*}
\mathbb{E}[\Exp_{\sigma}(X_{P_n} h_1)]=\\\frac{1}{4^{n+1}}\sum_{u \in \{\pm 1, \pm i\}} \left( \prod_{i}\left(\frac{1}{1-ut_i}\right)^{2^n} + (4^{n}-1)\prod_{i}\left(\frac{1}{1-(ut_i)^2}\right)^{2^{n-1}}\right).
\end{multline*}
Because everything is homogeneous in $u$, evaluating the outer sum yields that the terms of degree (in the variables $t_i$) not divisible by $4$ are identically zero, while the terms of degree divisible by $4$ agree with those of
\begin{multline*}
\frac{1}{4^n}\left( \prod_{i}\left(\frac{1}{1-t_i}\right)^{2^n} + (4^{n}-1)\prod_{i}\left(\frac{1}{1-t_i^2}\right)^{2^{n-1}}\right) \\
\begin{aligned}
&= \frac{1}{4^n}\Exp_{\sigma}(p_1)^{2^n} + \Bigl(1-\frac{1}{4^n}\Bigr)\Exp_{\sigma}(p_2)^{2^{n-1}} \\
&= \frac{1}{4^n}\Exp_{\sigma}(2^n p_1) + \Bigl(1-\frac{1}{4^n}\Bigr)\Exp_{\sigma}(2^{n-1}p_2).
\end{aligned}
\end{multline*}
The explicit coefficient formulas follow from this computation after substituting
\begin{align*} \left(\frac{1}{1-t_i}\right)^{2^n} = \sum_{k \geq 0} \binom{2^n+k-1}{k}t_i^k \;\textrm{ and } 
\left(\frac{1}{1-t_i^2}\right)^{2^{n-1}} = \sum_{k \geq 0} \binom{2^{n-1}+k-1}{k}t_i^{2k}.  
\end{align*}
\end{proof}

\subsubsection{Generalized permutation groups}

\begin{proof}[Proof of Theorem~\ref{thm.asymptotic-distributions-of-finite-groups}-(1)]
We fix a $k$-transitive subgroup $H \leq \Sigma_n$ (in particular, $k \leq n$); for clarity of exposition, we first treat the case where $r=1$, so we are considering the permutation action of $G=H$ on $\mathbb{C}^n$.
Let $\tau=(\tau_1, \ldots, \tau_m)$ with $|\tau| \leq k$.
Applying Lemma~\ref{lemma.permutation-rep}, the coefficient of $m_\tau$ in $\mathbb{E}[\Exp_{\sigma}(X_H h_1)]$ is given by the number of orbits of $H$ acting on the set of ordered tuples $(T_1, \ldots, T_m)$ where each $T_i$ is a multiset of size $\tau_i$ of elements of $\{1, \ldots, n\}$.
As in the proof of \cite[Proposition~4.3.2]{Howe.RandomMatrixStatisticsAndZeroesOfLFunctionsViaProbabilityInLambdaRings}, giving such an ordered tuple is the same as giving a labeling of $\{1, \ldots, n\}$ by vectors in $\mathbb{Z}_{\geq 0}^m$ that adds up to $\tau$ --- the tuple of multisets $(T_1, \ldots, T_m)$ corresponds to the labeling where $i \in \{1, \ldots, n\}$ is labeled by the vector $\vec{v}_i$ whose $j$th entry is the multiplicity of $i$ in $T_j$.
Because $|\tau| \leq k$, there can be at most $k$ distinct nonzero labels appearing in any such labeling.
But then, because $H$ acts $k$-transitively, the $H$-orbit of the labeling depends only on the labels counted with multiplicity.
In other words, as in \cite[Proposition~4.3.2]{Howe.RandomMatrixStatisticsAndZeroesOfLFunctionsViaProbabilityInLambdaRings}, the number of orbits is equal to the number of $n$-part partitions of $\tau$ in $\mathbb{Z}_{\geq 0}^m$,
\[ \tau = \sum_{\vec{v} \in \mathbb{Z}_{\geq 0}^m} a_{\vec{v}} \vec{v}, \textrm{ where } a_{\vec{v}} \in \mathbb{Z}_{\geq 0} \textrm{ and } \sum_{\vec{v}} a_{\vec{v}}=n. \]
In particular, since $|\tau|\leq k \leq n$, any partition of $\tau$ in $\mathbb{Z}_{\geq 0}^m$ has at most $n$ vectors $\vec{v}$ with $a_{\vec{v}}\neq 0$, so this equals the number of partitions of $\tau$ in $\mathbb{Z}_{\geq 0}^m \backslash \{\vec{0}\}$,
\[ \tau = \sum_{\vec{v} \in \mathbb{Z}_{\geq 0}^m \backslash \{0\}} a_{\vec{v}} \vec{v}, \textrm{ where } a_{\vec{v}} \in \mathbb{Z}_{\geq 0}. \]
This agrees with the coefficient of $m_\tau$ in 
\begin{equation}\label{eq.generalized-partition-gen-function}
\Exp_{\sigma}(h_1 + h_2 + \ldots) = \Exp_{\sigma}\Biggl(\sum_{\vec{v} \in \mathbb{Z}_{\geq 0}^{\oplus \mathbb{N}}\backslash \vec{0}} \underline{t}^{\vec{v}}\Biggr) = \prod_{\vec{v} \in \mathbb{Z}_{\geq 0}^{\oplus \mathbb{N}}\backslash \vec{0}}\frac{1}{1-\underline{t}^{\vec{v}}}= \prod_{\vec{v} \in \mathbb{Z}_{\geq 0}^{\oplus \mathbb{N}}\backslash \vec{0}}\sum_{a \geq 0} \underline{t}^{a\vec{v}} 
\end{equation}
where, for $\vec{v}(j)$ the $j$th component of $\vec{v}$, $\underline{t}^{\vec{v}}:=t_1^{\vec{v}(1)}t_2^{\vec{v}(2)}\cdots$. 

We now consider the general case, $G= \mu_r^n \rtimes H$ with $r \geq 1$.
Since any element of $\mathrm{Sym}^\tau \mathbb{C}^n$ fixed by $G$ is fixed by $H$, it suffices to understand which of the orbits as above give rise to vectors also fixed by $\mu_r^n$.
But, on the basis vector in $\mathrm{Sym}^\tau \mathbb{C}^n$ corresponding to a labeling by $\vec{v}_1, \ldots, \vec{v}_n$, $(\zeta_1, \ldots, \zeta_n) \in \mu_r^n$ acts as multiplication by \begin{equation}\label{eq.rou-action} \prod_{i=1}^n \zeta_i^{\sum_j \vec{v}_i(j)}.\end{equation}
This scalar is $1$ for all elements in $\mu_r^n$ if and only if $r | \sum_j \vec{v}_i(j)$ for each $i$.
It then follows that the orbits corresponding to vectors fixed by $G$ are those corresponding to $\tau = \sum a_{\vec{v}} \vec{v}$ where each $\vec{v}$ with $a_{\vec{v}} \neq 0$ satisfies $r | \sum_j \vec{v}(j)$ (cf.\ the proof of Lemma~\ref{lemma.permutation-rep}).
At the level of the generating function \eqref{eq.generalized-partition-gen-function}, this amounts to only considering those monomials of degree divisible by $r$ in the indexing set, so we obtain
\[ \mathbb{E}[\Exp_{\sigma}(X_G h_1)]=\Exp_{\sigma}(h_r + h_{2r} + \ldots). \]

\end{proof}

\subsubsection{$W(D_n)$}
\begin{proof}[Proof of Theorem~\ref{thm.asymptotic-distributions-of-finite-groups}-(2)]
We have $W(D_n) = W \rtimes \Sigma_n$, where $W \leq (\mu_2)^n$ is the subgroup consisting of tuples whose product is $1$ (i.e., such that there are an even number of components equal to $-1$).
The analysis is thus the same as in the proof of Theorem~\ref{thm.asymptotic-distributions-of-finite-groups}-(1), except at the last step: we find now that the orbits corresponding to fixed vectors are exactly those where the quantities $\sum_j \vec{v}(j)$ are either all even (for $\vec{v}$ such that $a_{\vec{v}} \neq 0$) or all odd (where in the odd case there must be exactly $n$ non-zero vectors in the partition so that we get an odd exponent corresponding to each $\vec{v}_i$, $1 \leq i \leq n$, in \eqref{eq.rou-action}). The even case gives the contribution $\Exp_{\sigma}(h_2 + h_{4} + \ldots)$, and the odd case, under the constraint $|\tau| \leq n$, gives the contribution $h_n$ (there is one way, up to ordering, to write a monomial of degree $n$ as a product of $n$ monomials of degree $1$, and there are zero ways for a monomial of degree $<n$). 
\end{proof}

\section{\texorpdfstring{$\Lambda$}{\textLambda}-distributions and classical distributions}\label{s.lambda-to-classical}

In this section we establish a simple relationship between the shape of the $\sigma$-moment generating function or falling $\sigma$-moment generating function of a pre-$\lambda$ random variable and the independence of certain associated classical random variables (Theorem~\ref{thm.lambda-to-classical} and Corollary~\ref{cor.independence-mgf}). In Examples~\ref{example.orthogonal-and-symplectic} and \ref{example.poisson}, we use this to make a robust connection between the $\Lambda$-distributions for classical groups computed in \cite[Theorem~A]{Howe.RandomMatrixStatisticsAndZeroesOfLFunctionsViaProbabilityInLambdaRings} and the asymptotically independent random variables computed previously by Diaconis and Shahshahani \cite{DiaconisShahshahani.OnTheEigenvaluesOfRandomMatrices}; this was previously explained in \cite{Howe.RandomMatrixStatisticsAndZeroesOfLFunctionsViaProbabilityInLambdaRings} only by a kludgy direct comparison of complicated joint moment formulae.

In these and other applications, such comparisons are most often relevant not for a fixed pre-$\lambda$ random variable, but rather for the limiting distribution of a sequence of pre-$\lambda$ random variables. Thus, before, establishing the relation, we first establish some basic language for discussing abstract $\Lambda$-distributions and associated abstract classical (joint) distributions.

\subsection{Abstract \texorpdfstring{\boldmath$\Lambda$}{\textLambda}-distributions}
\begin{definition}
Let $C$ be a ring. An \emph{abstract $C$-valued $\Lambda$-distribution} is a homomorphism of abelian groups $\mu: \Lambda \rightarrow C$ such that $\mu(1_{\Lambda})=1_C$. The associated abstract $\sigma$-moment generating function is 
\[ \mathrm{SMG}_\mu := \mu(\Exp_{\sigma}(h_1(s_\bullet)h_1(t_\bullet))) = \sum_\tau \mu(h_\tau) m_\tau \in \Lambda_{C}^\wedge  \]
where in the middle we are working in the pre-$\lambda$ ring $\Lambda_{\Lambda}^\wedge$ with the $s_\bullet$ variables appearing in the coefficient copy of $\Lambda$, and $\mu$ is applied to these coefficients. 
Similarly, the associated abstract falling $\sigma$-moment generating function is 
\[ \mathrm{FSMG}_\mu=\mu( (1+h_1(t_\bullet))^{h_1(s_\bullet)}) = \sum_{\tau} \mu(c_\tau)m_\tau \in \Lambda_C^\wedge.\]
\end{definition}

\begin{example}
    If $(R, \mathbb{E}: R \rightarrow C)$ is a pre-$\lambda$ probability space, $X \in R$, and $\mu=\mu_X: \Lambda \rightarrow C$ is the $\Lambda$-distribution of the random variable $X$, \eqref{eq.smg-monomial} and \eqref{eq.fmg-monomial} imply
    \[ \mathrm{SMG}_\mu=\mathbb{E}[\Exp_{\sigma}(Xh_1)] \textrm{ and } \mathrm{FSMG}_\mu = \mathbb{E}[(1+h_1)^X] . \]
\end{example}

\subsubsection{}We note that, if $C$ is a $\mathbb{Q}$-algebra, then any abstract $C$-valued $\Lambda$-distribution $\mu$ extends uniquely to a $\mathbb{Q}$-linear map $\Lambda_{\mathbb{Q}} \rightarrow C$. By abuse of notation, we denote this extension also by $\mu$. 

\begin{definition}\label{def.abstr-ind-class}
Suppose $C$ is a $\mathbb{Q}$-algebra and $\mu$ is an abstract $C$-valued $\Lambda$-distribution.
\begin{enumerate} 
\item For $f \in \Lambda_{\mathbb{Q}}$, the \emph{moment generating function of the abstract classical distribution associated to $f$} is 
\[ \sum_{k \geq 0} \frac{\mu(f^k)}{k!} t^k \in 1+ tC[[t]].\]
The \emph{falling moment generating function of the abstract classical distribution associated to $f$} is 
\[ \sum_{k \geq 0} \mu\left(\binom{f}{k}\right) t^k = \sum_{k \geq 0} \frac{\mu(f(f -1)\cdots(f-k+1))}{k!}t^k \in 1+ tC[[t]].\]
\item Let $\{f_i\}_{i \in I}$ be elements of $\Lambda_{\mathbb{Q}}$. We say \emph{the associated abstract classical joint distribution of $\{f_i\}_{i \in I}$ is independent} if, for every finite set $\{i_1, \ldots, i_k\} \subseteq I$ and $g_{i_1}, \ldots, g_{i_k} \in \mathbb{Q}[x]$, 
\[ \mu(g_{i_1}(f_{i_1}) \cdot \ldots \cdot g_{i_k}(f_{i_k}) ) = \prod_{j=1}^k \mu(g_{i_j}(f_{i_j})). \]
\end{enumerate}
\end{definition}

\begin{remark}\label{remark.independence}
    Since $\mu$ is $\mathbb{Z}$-linear, in Definition~\ref{def.abstr-ind-class}-(2) it is equivalent to require the condition only for monomials, i.e., that for every $n_1,\ldots, n_k \in \mathbb{Z}_{\geq 0}$, 
\[ \mu(f_{i_1}^{n_1} \cdot \ldots \cdot f_{i_k}^{n_k})=\prod_{j=1}^k\mu(f_{i_j}^{n_j}), \]
i.e., that the abstract joint moments are multiplicative. Similarly, we could replace monomials with any $\mathbb{Q}$-basis of $\mathbb{Q}[x]$. This applies, in particular, to the binomial coefficients $\binom{x}{n}=\frac{(x)(x-1)\ldots(x-n+1)}{n!}$, in which case the condition becomes multiplicativity of joint falling moments.  
\end{remark}

The following example motivates Definition~\ref{def.abstr-ind-class}. 

\begin{example}Suppose $X_G$ is the $\mathbb{Z}[\mathbb{C}]$-valued random variable associated to a finite matrix group $G$ as in \S\ref{ss.matrix-group-dist} and $\mu=\mu_{X_G}$. Then, for any $f \in \Lambda$, 
\[ \mu(f^k) = \mathbb{E}[ f^k \circ X_G] = \mathbb{E}[ (f \circ X_G)^k]. \]
Unwinding the definitions, this is the $k$th moment of the classical $\mathbb{C}$-valued random variable $X_{G,f}$ sending $g \in G$ to the value of $f$ on the eigenvalues of $g$. In particular, the moment generating function of the abstract classical distribution associated to $f$ and $\mu$ in the sense of Definition~\ref{def.abstr-ind-class}-(1) \emph{is} the moment generating function of the genuine classical random variable $X_{G,f}$, and similarly for the falling moment generating function. Since $X_{G,f}$ is bounded, if it is real-valued then these moment generating functions uniquely determine the distribution of $X_{G,f}$.

Moreover, given a collection of symmetric functions $\{f_i\}_{i \in I}$ such that each $X_{G,f_i}$ is real-valued, the associated abstract classical joint distribution of $\{f_i\}_{i \in I}$ is independent in the sense of Definition~\ref{def.abstr-ind-class}-(2) if and only if the genuine classical random variables $\{X_{G, f_i}\}_{i \in I}$ are independent (here we use that these classical random variables are bounded so that independence is equivalent to multiplicativity in the formation of joint moments). 
\end{example}

\begin{remark}
    If $\mu$ is an abstract $C$-valued $\Lambda$-distribution where $C$ is torsion-free, then there is no loss of information in replacing $C$ with $C \otimes_{\mathbb{Z}} \mathbb{Q}$ to apply the results and definitions here and in what follows. Note that if $C$ is a torsion-free pre-$\lambda$ ring then there is a canonical extension of the pre-$\lambda$ ring structure to $C \otimes_{\mathbb{Z}} \mathbb{Q}$. 
\end{remark}

\subsection{The shapes of independence}

\begin{maintheorem}\label{thm.lambda-to-classical}
  Suppose $C$ is a $\mathbb{Q}$-algebra and $\mu$ is an abstract $C$-valued $\Lambda$-distribution.
  \begin{enumerate}
  \item The associated abstract classical joint distribution of the power sum symmetric functions $\{p_i\}_{i \in \mathbb{Z}_{\geq 1}}$ is independent (Definition~\ref{def.abstr-ind-class}) if and only if there are power series $\gamma_i \in 1+t C[[t]]$ such that
\[ \mathrm{SMG}_{\mu}= \prod_{i=1}^\infty \gamma_i\left(\frac{p_i}{i}\right). \]
    Moreover, in this case $\gamma_i$ is the moment generating function of the abstract classical distribution associated to $p_i$. 
    
  \item The associated abstract classical joint distribution of the cycle counting symmetric functions $\{p'_i\}_{i \in \mathbb{Z}_{\geq 1}}$ is independent if and only if there are power series $\gamma_i \in 1 + t\mathbb{C}[[t]]$ such that
\[ \mathrm{FSMG}_{\mu}= \prod_{i=1}^\infty \gamma_i(p_i). \]
    Moreover, in this case $\gamma_i$ is the falling moment generating function of the abstract classical distribution associated to $p_i'$. 
  \end{enumerate}
\end{maintheorem}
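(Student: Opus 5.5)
Part (1) will follow from the power sum expansion \eqref{eq.exp-power-sum-expansion} of $\Exp_\sigma$ in the pre-$\lambda$ ring $\Lambda_{\Lambda_{\mathbb{Q}}}^\wedge$. Applied to $F=h_1(s_\bullet)h_1(t_\bullet)$, and using $p_i\circ(h_1(s_\bullet)h_1(t_\bullet))=p_i(s_\bullet)p_i(t_\bullet)$, it gives
\[ \Exp_\sigma(h_1(s_\bullet)h_1(t_\bullet))=\prod_{i\ge1}\exp\Bigl(\frac{p_i(s_\bullet)p_i(t_\bullet)}{i}\Bigr)=\sum_{\lambda}\frac{p_\lambda(s_\bullet)p_\lambda(t_\bullet)}{z_\lambda}, \]
which is the usual Cauchy identity. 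Writing $\lambda$ in multiplicity notation $(1^{n_1}2^{n_2}\cdots)$, we have $z_\lambda=\prod_i i^{n_i}n_i!$. Applying $\mu$ coefficientwise (legitimate since $\mu$ is $\mathbb{Q}$-linear and everything is graded), we obtain
\[ \mathrm{SMG}_\mu=\sum_{(n_i)}\mu\Bigl(\prod_i p_i^{n_i}\Bigr)\prod_i\frac{1}{n_i!}\Bigl(\frac{p_i}{i}\Bigr)^{n_i}. \]
The key observation is that the $p_\lambda(t_\bullet)$ form a $\mathbb{Q}$-basis of $\Lambda_{\mathbb{Q}}$, so $\Lambda_C^\wedge\cong C[[p_1,p_2,\ldots]]$ as a power series ring in the algebraically independent variables $p_i$ (equivalently $p_i/i$). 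Hence $\mathrm{SMG}_\mu$ has the form $\prod_i\gamma_i(p_i/i)$ if and only if its coefficients $a_{(n_i)}=\mu(\prod p_i^{n_i})$, taken with respect to the monomials $\prod (p_i/i)^{n_i}/n_i!$, factor as $\prod_i c_{i,n_i}$ with $c_{i,0}=1$. Setting all but one $n_i$ to zero forces $c_{i,n}=\mu(p_i^n)$, so $\gamma_i=\sum_n\mu(p_i^n)t^n/n!$, the moment generating function. The factorization condition is then exactly multiplicativity of joint moments, which is independence by Remark~\ref{remark.independence}. The identification of $\gamma_i$ follows by specializing all other variables $p_j$ to $0$.

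Part (2) will follow the same pattern using \eqref{eq.power-expansion} with $F=1+h_1(t_\bullet)$ and $N=h_1(s_\bullet)$, again working in $\Lambda_{\Lambda_{\mathbb{Q}}}^\wedge$. This gives
\[ (1+h_1)^{h_1(s)}=\prod_i(1+p_i(t_\bullet))^{p_i'(s_\bullet)}=\prod_i\sum_{n\ge0}\binom{p_i'(s_\bullet)}{n}p_i(t_\bullet)^n, \]
where $h_1(t_1^i,t_2^i,\ldots)=p_i(t)$. Expanding the product and applying $\mu$ yields
\[ \mathrm{FSMG}_\mu=\sum_{(n_i)}\mu\Bigl(\prod_i\binom{p_i'}{n_i}\Bigr)\prod_i p_i^{n_i}. \]
By the same algebraic independence of the $p_i$, this factors as $\prod_i\gamma_i(p_i)$ if and only if the joint falling moments are multiplicative, which is independence by the binomial-basis version in Remark~\ref{remark.independence}. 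In that case $\gamma_i=\sum_n\mu\bigl(\binom{p_i'}{n}\bigr)t^n$.

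The main obstacle is justifying the identities in the coefficient ring $\Lambda_{\mathbb{Q}}$ rather than in $\Lambda$. Specifically, we need that \eqref{eq.exp-power-sum-expansion} and \eqref{eq.power-expansion} apply in $\Lambda_{\Lambda}^\wedge\otimes\mathbb{Q}$, so that we may divide by $n!$ and $i$, and that $p_i\circ h_1(s)=p_i(s)$ and $p_i'\circ h_1(s)=p_i'(s)$ in the coefficient $\lambda$-ring $\Lambda$. I would check this by embedding into the $\mathbb{Q}$-algebra $\Lambda_{\mathbb{Q}}$ with its canonical extended structure, as in the remark preceding the theorem. I would also check the convergence and coefficientwise application of $\mu$, which holds because each $m_\tau$-coefficient is a finite sum. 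The $\mathbb{C}$ in the statement of (2) should read $C$.
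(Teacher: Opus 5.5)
Your proposal is correct and follows the paper's proof essentially step for step. In both parts you expand via \eqref{eq.exp-power-sum-expansion} (resp.\ \eqref{eq.power-expansion}), apply $\mu$ coefficientwise, and compare coefficients in the power series ring $\Lambda_C^\wedge \cong C[[p_1,p_2,\ldots]]$; you then read factorization as multiplicativity of joint (falling) moments via Remark~\ref{remark.independence}. Your extra care about working with $\Lambda_{\mathbb{Q}}$-coefficients, and your reading of $\mathbb{C}$ as $C$ in part (2), are sound points that the paper leaves implicit.
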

\begin{proof}
To prove part (1), first recall that 
\[ \mathrm{SMG}_\mu := \mu(\Exp_{\sigma}(h_1(s_\bullet)h_1(t_\bullet))). \]
By \eqref{eq.exp-power-sum-expansion}, we may rewrite this as
\[ \mathrm{SMG}_\mu = \mu\left( \prod_{i=1}^\infty \mathrm{exp}\left(\frac{1}{i}p_i \circ (h_1(s_\bullet)h_1(t_\bullet))\right)\right)=\mu \left(\prod_{i=1}^\infty \mathrm{exp}\left(\frac{1}{i}p_i(s_\bullet)p_i(t_\bullet)\right) \right),   \]
where to obtain the last equality we have used the definition of the $\lambda$-ring structure on $\Lambda_{\Lambda}^\wedge \subseteq \Lambda[[t_{\mathbb{N}}]]$ as in \cite[Lemma~2.2.4, \S2.3]{Howe.RandomMatrixStatisticsAndZeroesOfLFunctionsViaProbabilityInLambdaRings}. Expanding, we obtain 
\[ \mathrm{SMG}_\mu = \sum_{(k_1, k_2, \ldots)} \frac{1}{k_1!k_2!\cdots} \mu\left(p_1^{k_1}p_2^{k_2} \cdots\right)\left(\frac{p_1}{1}\right)^{k_1}\left(\frac{p_2}{2}\right)^{k_2} \cdots. \]
Since $\Lambda_C^\wedge$ is an infinite formal power series ring in the variables $\frac{p_i}{i}$, we find this is of the form
\[ \prod_{i=1}^\infty \gamma_i\left(\frac{p_i}{i}\right) \]
if and only if 
\begin{equation}\label{eq.first-crv-expansion} \gamma_i=\sum_{k = 0}^\infty\frac{ \mu(p_i^{k})}{k!}t^k \end{equation}
and 
\[ \mu(p_1^{k_1}p_2^{k_2} \cdots)=\mu(p_1^{k_1})\mu(p_2^{k_2}) \cdots, \]
i.e., if and only if $\gamma_i$ is the moment generating function of the abstract classical distribution associated to $p_i$ and the associated abstract classical joint distribution of the $\{p_i\}_{i \in \mathbb{Z}_{\geq 1}}$ is independent (using Remark~\ref{remark.independence}). 

We argue part (2) similarly: first recall that
\[ \mathrm{FSMG}_\mu=\mu( (1+h_1(t_\bullet))^{h_1(s_\bullet)}).  \]
By \eqref{eq.power-expansion}, we may rewrite this as
\[ \mathrm{FSMG}_{\mu}=\mu \left(\prod_{i=1}^\infty (1 + h_1(t_\bullet^i))^{p_i' \circ h_1(s_\bullet)}\right)=\mu \left(\prod_{i=1}^\infty (1 + p_i(t_\bullet)) ^{p_i'(s_\bullet)}\right)\]
where the powers in this equation are not taken in the pre-$\lambda$ sense but are instead given by the naive binomial expansion 
\[ (1+a)^N=\sum_{k \geq 0} \binom{N}{k} a^k=\sum_{k \geq 0} \frac{N(N-1)\cdots(N-k+1)}{k!} a^k.\]
Expanding, we obtain
\[ \mathrm{FSMG}_{\mu}= \sum_{(k_1, k_2, \ldots)} \mu\left(\binom{p_1'}{k_1}\binom{p_2'}{k_2}\cdots\right) p_1^{k_1}p_2^{k_2} \cdots. \]
Since $\Lambda_C^\wedge$ is an infinite formal power series ring in the variables $p_i$,
we find this is of the form
\[ \prod_{i=1}^\infty \gamma_i(p_i) \]
if and only if 
\begin{equation}\label{eq.second-crv-expansion} \gamma_i=\sum_{k = 0}^\infty\mu\left(\binom{p_i'}{k}\right)t^k \end{equation}
and 
\[ \mu\left(\binom{p_1'}{k_1}\binom{p_2'}{k_2}\cdots\right)=\mu\left(\binom{p_1'}{k_1}\right)\mu\left(\binom{p_2'}{k_2}\right)\cdots, \]
i.e., if and only if $\gamma_i$ is the falling moment generating function of the classical distribution associated to $p_i'$ and the associated classical joint distribution of the $\{p_i'\}_{i \in \mathbb{Z}_{\geq 1}}$ is independent (using Remark~\ref{remark.independence}). 
\end{proof}

In many natural examples the $\sigma$-moment or falling $\sigma$-moment generating function has a simple expression using $\Exp_\sigma$. The following corollary deduces from Theorem~\ref{thm.lambda-to-classical} an easy-to-apply criterion for the independence of associated classical joint distributions in terms of such an expression.

\begin{corollary}\label{cor.independence-mgf}
Let $\mu$ be an abstract $C$-valued $\Lambda$-distribution where $C$ is a $\mathbb{Q}$-algebra and a pre-$\lambda$ ring.
\begin{enumerate}
\item  If $\mathrm{SMG}_{\mu}= \Exp_{\sigma}(\sum_{i,k \geq 1} a_{i,k} p_i^k)$ for $a_{i,k} \in C$, then 
the associated abstract classical joint distribution of the power sum symmetric functions $\{p_i\}_{i \in \mathbb{Z}_{\geq 1}}$ is independent (Definition~\ref{def.abstr-ind-class}).  
\item If $\mathrm{FSMG}_{\mu}=\Exp_{\sigma}(\sum_{i,k \geq 1} a_{i,k} p_i^k)$ for $a_{i,k} \in C$, then the associated abstract classical joint distribution of the cycle counting symmetric functions $\{p'_i\}_{i \in \mathbb{Z}_{\geq 1}}$ is independent.
\end{enumerate}
\end{corollary}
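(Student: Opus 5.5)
The plan is to reduce both parts to Theorem~\ref{thm.lambda-to-classical}. It suffices to show that $\Exp_{\sigma}(\sum_{i,k} a_{i,k} p_i^k)$ factors as a product $\prod_i \gamma_i(\cdot)$, where each factor is a power series in a single variable: $p_i/i$ in part (1) and $p_i$ in part (2).

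The key step is to compute $\Exp_\sigma$ of such an argument using \eqref{eq.exp-power-sum-expansion}, which applies because $C$ is a $\mathbb{Q}$-algebra:
\[ \Exp_{\sigma}\Bigl(\sum_{i,k} a_{i,k} p_i^k\Bigr)=\prod_{j\geq 1}\exp\Bigl(\frac{1}{j}\, p_j\circ \sum_{i,k} a_{i,k}p_i^k\Bigr). \]
In the pre-$\lambda$ ring $\Lambda_C^\wedge$, the Adams operation $p_j\circ$ is additive and multiplicative on the $\Lambda$ part (\cite[Lemma~2.2.4]{Howe.RandomMatrixStatisticsAndZeroesOfLFunctionsViaProbabilityInLambdaRings}), and $p_j\circ p_i = p_{ij}$. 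Hence
\[ p_j\circ (a_{i,k}p_i^k) = (p_j\circ a_{i,k})\, p_{ij}^k, \]
where $p_j\circ a_{i,k}\in C$ uses the pre-$\lambda$ structure on $C$. Since $p_j$ acts additively on any pre-$\lambda$ ring, these identities hold even if $C$ is only a pre-$\lambda$ ring and not a $\lambda$-ring. The point to check carefully is the formula $p_j \circ (c\, f) = (p_j\circ c)(p_j\circ f)$ for $c\in C$ and $f\in\Lambda$ in $\Lambda_C^\wedge$. This follows from how the pre-$\lambda$ structure on $C[[\ul{t}_{\mathbb{N}}]]$ is defined, with $p_j\circ t_m=t_m^j$ and coefficients acted on through $C$.

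Regrouping the whole product by $m=ij$ gives
\[ \Exp_\sigma\Bigl(\sum a_{i,k}p_i^k\Bigr)=\prod_{m\geq 1}\exp\Bigl(\sum_{j\mid m}\sum_{k\geq 1}\frac{1}{j}(p_j\circ a_{m/j,k})\,p_m^k\Bigr). \]
The $m$th factor is a power series in $p_m$ alone with constant term $1$, so it is $\gamma_m(p_m/m)$ for an appropriate $\gamma_m\in 1+tC[[t]]$ (rescaling by $m$ is harmless over a $\mathbb{Q}$-algebra). The regrouping is legitimate because only finitely many pairs $(j,i)$ contribute to each degree, so all products converge in the $\Fil$-topology.

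Part (1) then follows immediately from Theorem~\ref{thm.lambda-to-classical}-(1). For part (2), the same expression, read as $\gamma_m(p_m)$, gives independence of the $p'_i$ by Theorem~\ref{thm.lambda-to-classical}-(2). One small discrepancy needs care: the statement there writes $1+t\mathbb{C}[[t]]$ where it should read $1+tC[[t]]$, and the proof of that theorem works verbatim over $C$. The main obstacle is purely bookkeeping: justifying the Adams operation calculation on coefficients in $C$ for a general pre-$\lambda$ ring $C$, and the convergence of the infinite rearrangement.
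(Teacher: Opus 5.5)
Your proposal is correct and follows essentially the same route as the paper: expand $\Exp_\sigma$ via \eqref{eq.exp-power-sum-expansion}, use $p_j\circ(a_{i,k}p_i^k)=(p_j\circ a_{i,k})\,p_{ij}^k$, regroup by $n=ij$ so that each factor is a power series in $p_n$ alone, and invoke Theorem~\ref{thm.lambda-to-classical}. The only difference is cosmetic: the paper first splits $\Exp_\sigma$ of the sum into $\prod_{i,k}\Exp_\sigma(a_{i,k}p_i^k)$ before expanding. Your remarks on convergence and on the $\mathbb{C}$ versus $C$ typo in Theorem~\ref{thm.lambda-to-classical}-(2) are correct.
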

\begin{proof}
Using \eqref{eq.exp-power-sum-expansion} for the second equality and \cite[Lemma~2.4]{Howe.RandomMatrixStatisticsAndZeroesOfLFunctionsViaProbabilityInLambdaRings} for the third, we find
    \begin{align*} \Exp_{\sigma}\left(\sum_{i,k \geq 1} a_{i,k} p_i^k\right) & = \prod_{i,k \geq 1} \Exp_{\sigma}(a_{i,k} p_i^k) \\
    & = \prod_{i,j,k \geq 1}\exp\left(\frac{1}{j}p_j \circ  (a_{i,k}p_i^k)\right)\\
    &=\prod_{i,j,k \geq 1}\exp\left(\frac{1}{j}(p_j \circ  a_{i,k})p_{ji}^k\right)\\
    &=\prod_{n\geq 1}\exp\left(\sum_{i,j,k \geq 1, ji=n} \frac{1}{j}(p_j \circ  a_{i,k})p_{n}^k\right).
    \end{align*}
As the $n$th term in the product expands to a power series in $p_n$, Theorem~\ref{thm.lambda-to-classical} applies. 
\end{proof}

\begin{remark}
In other words, Corollary~\ref{cor.independence-mgf} says that one obtains a classical independence result whenever $\Log_{\sigma}(\mathrm{SMG}_{\mu})$ or $\Log_{\sigma}(\mathrm{FSMG}_{\mu})$ is a formal sum of multiples of powers of power sum symmetric functions --- note that we can always expand these uniquely as formal series in the $p_\tau$ as $\tau$ varies over all partitions, but such an expansion can in general also include mixed terms such as $p_{(2,1,0,\ldots)}=p_2p_1$. 
\end{remark}

\begin{remark}
    The proofs in the two cases of Theorem~\ref{thm.lambda-to-classical} and Corollary~\ref{cor.independence-mgf} are similar, and it is possible to give an abstract generalization of both cases that applies to more general types of moment generating functions with a uniform proof. However, because only $\sigma$-moment generating functions and falling $\sigma$-moment generating functions seem to arise naturally in examples and applications, we have preferred to give instead the concrete statements and arguments in these two cases. 
\end{remark}

\subsection{Examples}

\begin{example}\label{example.orthogonal-and-symplectic} For $\mathrm{O}(n)\subseteq \mathrm{GL}_n(\mathbb{R})$ the $n \times n$ orthogonal group, we consider the random variable $X_{\mathrm{O}(n)}$ sending $M \in \mathrm{O}(n)$ to its multiset of eigenvalues (cf. \S\ref{ss.matrix-group-dist}) and write its $\Lambda$-distribution as $\mu_{\mathrm{O}(n)}: \Lambda \rightarrow \mathbb{Z}$. By \cite[Theorem~A]{Howe.RandomMatrixStatisticsAndZeroesOfLFunctionsViaProbabilityInLambdaRings}, 
the limit as $n \rightarrow \infty$ of $\mu_{\mathrm{O}(n)}$ is an abstract $\Lambda$-distribution $\mu_{\mathrm{O}(\infty)}$ determined by 
\[ \mathrm{SMG}_{\mu_{\mathrm{O}(\infty)}} = \Exp_{\sigma}(h_2)=\Exp_{\sigma}\left(\frac{1}{2}p_1^2 + \frac{1}{2}p_2\right) .\]
Applying Corollary~\ref{cor.independence-mgf}, we see immediately that the abstract classical random variables corresponding to the $p_i$ are independent; equivalently, the random variables $M \mapsto \mathrm{Tr}(M^i)$ in $\mathrm{O}(n)$ are asymptotically independent as $n \rightarrow \infty$. Moreover, the explicit limiting Gaussian distributions of these random variables described by Diaconis and Shahshahani in \cite[Theorem~4]{DiaconisShahshahani.OnTheEigenvaluesOfRandomMatrices} (see also \cite[Remarks~1.1.6 and 4.4.2]{Howe.RandomMatrixStatisticsAndZeroesOfLFunctionsViaProbabilityInLambdaRings}) are obtained from Theorem~\ref{thm.lambda-to-classical} by expanding as in the proof of Corollary~\ref{cor.independence-mgf}:
\begin{multline*}  \Exp_{\sigma}\left(\frac{1}{2}p_1^2 + \frac{1}{2}p_2\right) =\\\prod_{i\geq 1}\left( \begin{cases} \exp\left( \frac{1}{2} \frac{1}{i} p_i^2 \right) = \gamma_i(\frac{1}{i}p_i),\; \gamma_i(t):=\exp\left(\frac{1}{2} it^2\right) &  \textrm{$i$ odd} \\
 \exp\left(\frac{1}{i}p_{i} + \frac{1}{2} \frac{1}{j} p_i^2 \right)=\gamma_i(\frac{1}{i}p_i),\; \gamma_i(t):=\exp\left(t+\frac{1}{2} it^2\right) & \textrm{$i$ even}
\end{cases} \right).\end{multline*}
Indeed, $\gamma_i$ is the moment generating function of a Gaussian of variance $i$ and mean $0$ or $1$ as $i$ is odd or even. By the same method, one obtains the asymptotic independence and distributions of the traces of random matrices in the compact symplectic group $\mathrm{Sp}(n)$, $n \rightarrow \infty$, starting from the result of \cite[Theorem~A]{Howe.RandomMatrixStatisticsAndZeroesOfLFunctionsViaProbabilityInLambdaRings} which, in notation mirroring that for the orthogonal groups used above, says
\[ \mathrm{SMG}_{\mu_{\mathrm{Sp}(\infty)}} = \Exp_{\sigma}(e_2)=\Exp_{\sigma}\left(\frac{1}{2}p_1^2 - \frac{1}{2}p_2\right).\]
Expanding, we find the asymptotic moment generating function for  $M \mapsto \mathrm{Tr}(M^i)$ is that of a Gaussian of variance $i$ and mean $0$ or $-1$ as $i$ is odd or even.

\end{example}

\begin{example}\label{example.poisson}
    For $C$ a pre-$\lambda$ ring, a $C$-valued $\Lambda$-distribution $\mu$ is \emph{an abstract Poisson distribution of mean $m \in C$}  if and only if the following equivalent (by \cite[Lemma~3.3.1-(2)]{Howe.RandomMatrixStatisticsAndZeroesOfLFunctionsViaProbabilityInLambdaRings}) conditions hold:
    \begin{align*} \mathrm{SMG}_{\mu} &= \Exp_{\sigma}(m(h_1 + h_2 + h_3 + \ldots)) \textrm{ and } \\
    \mathrm{FSMG}_{\mu}&=\Exp_{\sigma}(mh_1). \end{align*}
    The form of the $\sigma$-moment generating function combined with Theorem~\ref{thm.asymptotic-distributions-of-finite-groups}-(1) shows that, for any sequence $G_n$ of $k_n$-transitive subgroups of $\Sigma_n$ with $k_n \rightarrow \infty$ as $n \rightarrow \infty$, the limiting $\Lambda$-distribution $\mu_{G_\infty}= \lim_{n \rightarrow \infty} \mu_{G_n}$ is an abstract Poisson distribution of mean $1$. This applies, in particular, to $G_n=\Sigma_n$ (see also \cite[Theorem~A-(3)]{Howe.RandomMatrixStatisticsAndZeroesOfLFunctionsViaProbabilityInLambdaRings}) or $G_n=A_n$, the sequence of alternating groups as in Example~\ref{example.alternating}. 
    
    Now, if $\mu$ is Poisson, the formula for the falling $\sigma$-moment generating function can be rewritten as
 \[ \Exp_{\sigma}(mp_1)=\prod_{i=1}^\infty \exp\left(\frac{1}{i}p_i\circ (mp_1)\right)=\prod_{i=1}^\infty \exp\left(\frac{1}{i} (p_i \circ m) p_i\right) \]
 where we have used \eqref{eq.exp-power-sum-expansion} in the first equality and \cite[Lemma~2.4]{Howe.RandomMatrixStatisticsAndZeroesOfLFunctionsViaProbabilityInLambdaRings} in the second equality. Theorem~\ref{thm.lambda-to-classical} then gives that the abstract classical joint distribution associated to the cycle-counting symmetric functions $p_i'$ is independent, and the $i$th one has an abstract classical Poisson distribution of mean $\frac{1}{i} p_i \circ m$. 
 
 In particular, for a sequence of sufficiently transitive permutation groups $G_n\leq \Sigma_n$ as above, we find that the random variables counting $i$-cycles on elements of $G_n$ are asymptotically (as $n \rightarrow \infty$) independent Poisson random variables of mean $\frac{1}{i}$. In the case of the full symmetric group, this is \cite[Theorem~7]{DiaconisShahshahani.OnTheEigenvaluesOfRandomMatrices} (see also \cite[\S4.5]{Howe.MotivicRandomVariablesAndRepresentationStabilityIConfigurationSpaces}).
\end{example}

\bibliographystyle{plain}
\bibliography{references}

@article{Howe.RandomMatrixStatisticsAndZeroesOfLFunctionsViaProbabilityInLambdaRings,
title={Random matrix statistics and zeroes of {L}-functions via probability in $\lambda$-rings},
author={Howe, Sean},
journal={ar{X}iv:2412.19295},
year={2024}
}

@article{BertucciHowe.EquidistributionAndArithmeticLambdaDistributions,
title={Equidistribution and arithmetic {$\Lambda$}-distributions},
author={Bertucci, Matthew and Howe, Sean},
journal={ar{X}iv:2505.24748},
year={2025}
}

@article{Howe.TheNegativeSigmaMomentGeneratingFunction,
title={The negative $\sigma$-moment generating function},
author={Howe, Sean},
journal={ar{X}iv:2505.01205},
year={2025}
}

@article {Howe.MotivicRandomVariablesAndRepresentationStabilityIConfigurationSpaces,
    AUTHOR = {Howe, Sean},
     TITLE = {Motivic random variables and representation stability {I}:
              {C}onfiguration spaces},
   JOURNAL = {Algebr. Geom. Topol.},
  FJOURNAL = {Algebraic \& Geometric Topology},
    VOLUME = {20},
      YEAR = {2020},
    NUMBER = {6},
     PAGES = {3013--3045},
      ISSN = {1472-2747,1472-2739},
   MRCLASS = {14C35 (14G10 18F30 55R80)},
  MRNUMBER = {4185934},
MRREVIEWER = {Geoffrey\ M. L. Powell},
       DOI = {10.2140/agt.2020.20.3013},
       URL = {https://doi.org/10.2140/agt.2020.20.3013},
}

@article {Stanley.InvariantsOfFiniteGroupsAndTheirApplicationsToCombinatorics,
    AUTHOR = {Stanley, Richard P.},
     TITLE = {Invariants of finite groups and their applications to
              combinatorics},
   JOURNAL = {Bull. Amer. Math. Soc. (N.S.)},
  FJOURNAL = {American Mathematical Society. Bulletin. New Series},
    VOLUME = {1},
      YEAR = {1979},
    NUMBER = {3},
     PAGES = {475--511},
      ISSN = {0273-0979,1088-9485},
   MRCLASS = {20C15 (05A15 13H10 14H10 15A72 51F15)},
  MRNUMBER = {526968},
MRREVIEWER = {Ralph\ Strebel},
       DOI = {10.1090/S0273-0979-1979-14597-X},
       URL = {https://doi.org/10.1090/S0273-0979-1979-14597-X},
}

@article{DiaconisShahshahani.OnTheEigenvaluesOfRandomMatrices,
  title={On the eigenvalues of random matrices},
  volume={31},
  DOI={10.1017/S0021900200106989},
  number={A},
  fjournal={Journal of Applied Probability},
  journal    = {J. Appl. Probab.},
  author={Diaconis, Persi and Shahshahani, Mehrdad},
  year={1994},
  pages={49--62},
  url        = {https://doi.org/10.2307/3214948},
}

\end{document}